\theoremstyle{plain}
\newtheorem{theorem}{Theorem}[section]
\newtheorem{lemma}[theorem]{Lemma}
\theoremstyle{definition}
\theoremstyle{remark}
\newtheorem{remark}[theorem]{Remark}
\begin{document}

\title{Existence and large radial solutions for an elliptic system under
finite new Keller-Osserman integral conditions}
\author{ Dragos-Patru Covei \\
{\small The Bucharest University of Economic Studies, Department of Applied
Mathematics}\\
{\small Piata Romana, 1st district, Postal Code: 010374, Postal Office: 22,
Romania}\\
{\small \texttt{dragos.covei@csie.ase.ro} }}
\date{}
\maketitle

\begin{abstract}
We investigate the existence, multiplicity, and asymptotic behavior of
entire positive radial solutions to the semilinear elliptic system%
\begin{equation*}
\Delta u=p(|x|)\,g(v),\qquad \Delta v=q(|x|)\,f(u),\qquad x\in \mathbb{R}%
^{n},\ n\geq 3,
\end{equation*}%
under new Keller--Osserman-type integral conditions on the nonlinearities $f$
and $g$, and decay constraints on the radial weight functions $p$ and $q$.
The nonlinearities are assumed continuous on $[0,\infty)$, differentiable on 
$(0,\infty)$, nondecreasing, multiplicatively subadditive, vanishing at the
origin, and strictly positive elsewhere, satisfying the finite reciprocal
integral conditions%
\begin{equation*}
\int_{1}^{\infty }\frac{dt}{\sqrt{\int_{0}^{t}g(f(z))\,dz}}<\infty \quad%
\text{ and }\quad\int_{1}^{\infty }\frac{dt}{\sqrt{\int_{0}^{t}f(g(z))\,dz}}%
<\infty .
\end{equation*}%
The radial weights satisfy $\int_{0}^{\infty }s\,p(s)\,ds<\infty$ and $%
\int_{0}^{\infty }s\,q(s)\,ds<\infty$, with $\min(p,q)$ not compactly
supported. Within this framework, we establish three main results: \textbf{%
(i)} existence of infinitely many entire positive radial solutions for each
central value $(a,b)$ in a nonempty open set $\mathcal{T}\subset(0,\infty)^2$%
; \textbf{(ii)} closedness of the set $S$ of all admissible central values
in $[0,\infty)^2$; and \textbf{(iii)} largeness (blow-up at infinity) of
solutions corresponding to boundary points $E=\partial S\cap(0,\infty)^2$.
The proofs employ novel comparison techniques with auxiliary scalar
problems, Arzel\~{a}~-Ascoli compactness arguments, and a subharmonic
functional approach tailored to the reciprocal integral conditions via
Keller--Osserman-type transforms. Our results extend classical
Keller--Osserman theory to a broad class of coupled elliptic systems with
general nonlinearities and weight functions, unifying and generalizing
several existing frameworks.
\end{abstract}

\noindent \textbf{MSC 2020:} 35J47 (Primary), 35B40, 35B09, 35J60, 35B33
(Secondary).

\noindent\textbf{Keywords:} Semilinear elliptic systems; Keller--Osserman
condition; entire positive radial solutions; large solutions; blow-up at
infinity; multiplicative subadditivity; comparison principle; weighted
elliptic systems; Arzel\~{a}~-Ascoli theorem.

\section{Introduction}

The study of positive entire solutions to nonlinear elliptic systems has a
long and rich history, driven both by deep theoretical challenges in
nonlinear analysis and by diverse applications in physics, geometry, and
biology. A classical starting point is the scalar equation 
\begin{equation*}
\Delta u=f(u),
\end{equation*}%
for which Keller \cite{KO57} and Osserman \cite{Os57} established the
celebrated \emph{Keller--Osserman condition}, providing sharp integral
criteria for the existence of large solutions (see also \cite{Santos2017}
for more details). These foundational results have inspired extensive
research on systems of elliptic equations, where the interaction between
components introduces new analytical difficulties.

For semilinear elliptic systems, Lair and Wood \cite{LairWood2000} proved
the existence of entire large positive solutions under suitable growth and
integrability conditions, while Peterson-Wood \cite{PetersonWood2011} and
Li-Yang \cite{LiYang2015} extended the analysis to non-monotone systems, but
with superlinear nonlinearities. More recently, Lair \cite{Lair2025}
investigated competitive-type systems, obtaining sharp existence results for
large solutions.

In addition to the aforementioned works, the contributions of \cite%
{CirsteaRadulescu2002,Covei2017,Covei2023,Wang2022} have played a
significant role in advancing the theory of elliptic systems, particularly
in elucidating the crucial interplay between nonlinearity and weight
functions.

\medskip In this work, we consider the semilinear elliptic system 
\begin{equation}
\Delta u=p(r)\,g(v),\qquad \Delta v=q(r)\,f(u),\qquad r=|x|,\quad x\in 
\mathbb{R}^{n},\ n\geq 3,  \label{eq:system}
\end{equation}%
under the following assumptions:

\begin{itemize}
\item[(F1)] $f,g:[0,\infty )\rightarrow \lbrack 0,\infty )$ are continuous
on $[0,\infty )$, differentiable on $(0,\infty )$, non-decreasing, with $%
f(0)=g(0)=0$ and $f(s),g(s)>0$ for all $s>0$;

\item[(F2)] multiplicative subadditivity:%
\begin{equation*}
f(sr)\leq f(s)f(r),\quad g(lt)\leq g(l)g(t),\quad \forall s,r,l,t\geq 0;
\end{equation*}

\item[(F3)] finite Keller--Osserman-type integrals: 
\begin{equation}
L_{f}:=\int_{1}^{\infty }\frac{dt}{\sqrt{\int_{0}^{t}g(f(z))\,dz}}<\infty
,\quad L_{g}:=\int_{1}^{\infty }\frac{dt}{\sqrt{\int_{0}^{t}f(g(z))\,dz}}%
<\infty ;  \label{eq:finite-recip}
\end{equation}

\item[(PQ)] $p,q:[0,\infty )\rightarrow \lbrack 0,\infty )$ are continuous,
not both identically zero, and satisfy 
\begin{align}
\mathcal{L}_{p}& =\frac{1}{n-2}\int_{0}^{\infty
}s\,p(s)\,ds=\lim_{r\rightarrow \infty }\mathcal{P}(r)<\infty ,  \notag \\
\mathcal{L}_{q}& =\frac{1}{n-2}\int_{0}^{\infty
}s\,q(s)\,ds=\lim_{r\rightarrow \infty }\mathcal{Q}(r)<\infty ,
\label{eq:decay-pq}
\end{align}%
where%
\begin{equation*}
\mathcal{P}(r)=\int_{0}^{r}t^{1-n}\int_{0}^{t}s^{n-1}p(s)\,ds\,dt,\quad 
\mathcal{Q}(r)=\int_{0}^{r}t^{1-n}\int_{0}^{t}s^{n-1}q(s)\,ds\,dt,
\end{equation*}%
and $\min (p,q)$ does not have compact support.
\end{itemize}

We seek positive radial $C^2$-solutions $(u(r),v(r))$ of \eqref{eq:system}
with prescribed central values

\begin{equation*}
u(0) = a > 0, \quad v(0) = b > 0, \quad u^{\prime }(0) = v^{\prime }(0) = 0.
\end{equation*}

A \emph{positive entire large solution} is a pair $(u,v)$ of $C^2$ radial
functions such that $u(r),v(r) > 0$ for all $r \ge 0$ and

\begin{equation}
\lim_{r\rightarrow \infty }u(r)=\lim_{r\rightarrow \infty }v(r)=\infty .
\label{lim}
\end{equation}

In \cite{Lair2025}, Lair proved existence of such solutions for power-type
nonlinearities $f(u)=u^{\theta }$, $g(v)=v^{\sigma }$ under certain
integrability conditions on $p$ and $q$. Our goal is to extend this to
general nonlinearities satisfying \eqref{eq:finite-recip}.

\begin{remark}[Relation to Lair's result]
When $g(v)=v^{\sigma }$ and $f(u)=u^{\theta }$ with $\sigma ,\theta >0$ and $%
\sigma \cdot \theta >1$, the Keller--Osserman integrals in %
\eqref{eq:finite-recip} converge, recovering the power-growth case in \cite%
{Lair2025}.
\end{remark}

\medskip Our main objectives are:

\begin{itemize}
\item to prove the existence of infinitely many positive entire radial
solutions for a nonempty set of central values;

\item to show that the set of admissible central values is closed;

\item to establish that solutions corresponding to boundary points of this
set are \emph{large}, i.e., both components blow up at infinity.
\end{itemize}

The novelty of our approach lies in the construction of a new subharmonic
functional adapted to the finite reciprocal integral conditions on $f$ and $%
g $, which allows us to complete, extend and unify the frameworks developed
in \cite%
{Covei2017,Covei2023,CirsteaRadulescu2002,KO57,Os57,LairWood2000,Lair2025,PetersonWood2011}
to a broader class of nonlinearities and weight functions.

\medskip We now state our three main results.

\begin{theorem}[Existence]
\label{thm:existence} For every $(a,b)$ in a suitable nonempty set of
admissible central values $\mathcal{T}\subset (0,\infty )^{2}$, there exists
an entire positive radial solution $(u,v)\in C^{2}([0,\infty ))^{2}$ of %
\eqref{eq:system} with $u(0)=a$, $v(0)=b$. Moreover, $u^{\prime }(r)\geq 0$
and $v^{\prime }(r)\geq 0$ for all $r>0$.
\end{theorem}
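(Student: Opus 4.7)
The plan is to construct solutions via a monotone Picard iteration. First I would reformulate (\ref{eq:system}) in radial coordinates, write $(r^{n-1}u')' = r^{n-1}p(r)g(v)$ and its counterpart, and integrate twice against the zero-derivative initial condition to obtain the equivalent integral system
\begin{align*}
u(r) &= a + \int_{0}^{r} t^{1-n}\!\int_{0}^{t} s^{n-1}p(s)g(v(s))\,ds\,dt,\\
v(r) &= b + \int_{0}^{r} t^{1-n}\!\int_{0}^{t} s^{n-1}q(s)f(u(s))\,ds\,dt.
\end{align*}
Starting from $u_0\equiv a$, $v_0\equiv b$, define $(u_{k+1},v_{k+1})$ by substituting $(u_k,v_k)$ on the right. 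Monotonicity of $f,g$ in (F1) yields that $\{u_k\}$ and $\{v_k\}$ are non-decreasing in $k$, each $u_k(r),v_k(r)$ is non-decreasing in $r$, and differentiating the integral formula gives $u_k'(r),v_k'(r)\ge 0$ automatically, so the monotonicity claim of the theorem will follow as soon as the iteration converges.

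The second step is a uniform a priori bound. Using the monotonicity in $r$ to extract the largest value of $g(v_k)$ and $f(u_k)$ and (PQ) to estimate $\mathcal{P}(r),\mathcal{Q}(r)$ by $\mathcal{L}_p,\mathcal{L}_q$,
\begin{equation*}
u_{k+1}(r)\le a + g(v_k(r))\,\mathcal{L}_p,\qquad v_{k+1}(r)\le b + f(u_k(r))\,\mathcal{L}_q.
\end{equation*}
Consequently, if one can exhibit positive constants $(M,N)$ with
\begin{equation*}
M\ge a + g(N)\,\mathcal{L}_p \quad\text{and}\quad N\ge b + f(M)\,\mathcal{L}_q,
\end{equation*}
an induction on $k$ yields $u_k\le M$ and $v_k\le N$ uniformly in $k$ and $r$. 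I would then define $\mathcal{T}\subset(0,\infty)^2$ as the set of central values $(a,b)$ admitting such a supersolution pair.

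To show $\mathcal{T}\neq\emptyset$, substitute $N=b+f(M)\mathcal{L}_q$ into the first inequality to obtain the scalar closure condition $M\ge a+g(b+f(M)\mathcal{L}_q)\,\mathcal{L}_p$. The multiplicative subadditivity (F2), combined with the elementary consequence $g(x+y)\le g(2)[g(x)+g(y)]$, reduces this to $M\ge C_1(a,b)+C_2\,g(f(M))$ with $C_1,C_2$ depending on $a,b,\mathcal{L}_p,\mathcal{L}_q$, where $C_1(a,b)$ can be made arbitrarily small by shrinking $(a,b)$. The finite Keller--Osserman integrals (F3) encode the super-linear profile of $g\circ f$ needed for $M\mapsto M-C_2\,g(f(M))$ to be positive on a non-degenerate interval; this is the content of the novel subharmonic functional advertised by the authors, built around $\Psi(t)=\int_{0}^{t} g(f(z))\,dz$ and the finite reciprocal integral $L_f$. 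Choosing $(a,b)$ in a sufficiently small neighborhood of the origin so that $C_1(a,b)$ lies below the maximum of $M-C_2 g(f(M))$ produces the required $M$, and then $N=b+f(M)\mathcal{L}_q$ closes the pair, placing $(a,b)$ in $\mathcal{T}$.

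Finally, for $(a,b)\in\mathcal{T}$ the iterates $(u_k,v_k)$ are non-decreasing in $k$ and uniformly bounded by $(M,N)$ on all of $[0,\infty)$. Monotone convergence gives pointwise limits $(u,v)$; dominated convergence (using the uniform bound together with continuity of $f,g$) passes to the limit in the integral equations, and standard interior regularity for integral operators built from continuous $p,q,f,g$ promotes $(u,v)$ to an element of $C^{2}([0,\infty))^2$ solving (\ref{eq:system}) with $u(0)=a$, $v(0)=b$ and $u',v'\ge 0$ inherited from the sequence. The principal obstacle is the supersolution construction of the third paragraph: quantifying how the finite-reciprocal-integral condition (F3) yields positivity of $M-C_2 g(f(M))$ on an interval that persists after subtracting the perturbation $C_1(a,b)$ is the technical heart of the argument, and it is precisely here that the new subharmonic functional machinery is substituted for the usual convexity or power-type hypotheses found in the earlier literature.
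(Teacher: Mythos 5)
Your Picard‑iteration setup (monotone increasing iterates in $k$ and in $r$, the a priori estimate $u_{k+1}(r)\le a+g(v_k(r))\,\mathcal{L}_p$, $v_{k+1}(r)\le b+f(u_k(r))\,\mathcal{L}_q$, and the passage to the limit) is sound.  The gap is at the step you yourself flag as the ``technical heart'': the existence of a \emph{constant} supersolution pair $(M,N)$ with $M\ge a+g(N)\mathcal{L}_p$ and $N\ge b+f(M)\mathcal{L}_q$, i.e.\ $M\ge C_1(a,b)+C_2\,g(f(M))$.  The finite Keller--Osserman integrals (F3) do \emph{not} guarantee that $M\mapsto M-C_2\,g(f(M))$ is positive on a non‑degenerate interval.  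In fact (F3) works against you: it forces $g\circ f$ to grow super‑linearly at infinity, so $M-C_2\,g(f(M))<0$ for large $M$; whether it is positive anywhere depends entirely on the behaviour of $g\circ f$ near zero, which the hypotheses (F1)--(F3) leave unconstrained.  For a concrete failure, take $f(s)=g(s)=s$ on $[0,1]$ and $f(s)=g(s)=s^2$ on $[1,\infty)$ (which satisfies (F1)--(F3)): then $g(f(M))/M\ge 1$ for all $M>0$, so whenever $C_2=\mathcal{L}_p\,g(2)\,g(\mathcal{L}_q)\ge 1$ (easily arranged by choosing $p,q$ with large $\mathcal{L}_p,\mathcal{L}_q$) no admissible $(M,N)$ exists, no matter how small $a,b$ are.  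Your $\mathcal{T}$ would then be empty, so the argument does not establish nonemptiness.

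The paper avoids exactly this pitfall by using a \emph{non‑constant} barrier.  After decoupling the system via Lemma~\ref{lem:subharmonic} (the forcing inequality), which shows $u$ solves the scalar differential inequality $\Delta u\le p(r)\,g(f(u))\,g\!\left(\tfrac{b}{f(a)}+\mathcal{L}_q\right)$, the comparison object is the entire \emph{large} radial solution $(z_1,z_2)$ of the auxiliary decoupled scalar system \eqref{eq:z-scalar}, whose existence under (F3) and (PQ) is taken from the scalar Keller--Osserman literature (\cite{Lair1999,Covei2009,Zhang2000}).  Because $z_1,z_2$ are allowed to grow to infinity, the comparison $u<z_1$, $v<z_2$ bounds $(u,v)$ on every compact interval without requiring any global constant bound; $\mathcal{T}=\{(a,b):a<c,\,b<d\}$ is then visibly nonempty, and the entire solution obtained need not be bounded.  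Also note that the object you call ``the novel subharmonic functional $\Psi(t)=\int_0^t g(f(z))\,dz$'' is not the transform the paper uses; the paper's $\Phi(t)=\int_t^\infty\frac{ds}{g(f(s))}$ and $\Psi(t)=\int_t^\infty\frac{ds}{f(g(s))}$ are reciprocal‑integral transforms that enter only in Theorem~\ref{thm:large}, not in the existence proof.  To repair your argument you would need to replace the constant supersolution $(M,N)$ by a growing barrier of the type the paper constructs.
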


\begin{theorem}[Nonemptiness and closedness of $S$]
\label{thm:S-properties} Let $S$ be the set of all central values $(a,b)\in
(0,\infty )^{2}$ for which \eqref{eq:system} admits an entire positive
radial solution. Then $S$ is nonempty and closed.
\end{theorem}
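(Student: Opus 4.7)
Nonemptiness of $S$ is immediate from Theorem \ref{thm:existence}, which furnishes a nonempty set $\mathcal{T} \subset S$.

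For closedness, my plan is to argue by contradiction. Let $(a_k, b_k) \in S$ with $(a_k, b_k) \to (a^*, b^*) \in [0,\infty)^2$, and let $(u_k, v_k) \in C^2([0,\infty))^2$ be the corresponding entire positive radial solutions, which are nondecreasing by Theorem \ref{thm:existence}. Suppose, for contradiction, that $(a^*, b^*) \notin S$. Standard ODE theory applied to the integral form of \eqref{eq:system} supplies a maximal local solution $(u^*, v^*)$ with $u^*(0) = a^*$, $v^*(0) = b^*$ on a right-neighborhood of $0$; since it is not entire, its maximal radius $R^* \in (0, \infty)$ satisfies $u^*(r) + v^*(r) \to \infty$ as $r \uparrow R^*$. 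By continuous dependence of the radial IVP on initial data on each compact subinterval of $[0, R^*)$, $(u_k, v_k) \to (u^*, v^*)$ in $C^1$ on $[0, R^* - \epsilon]$ for every small $\epsilon > 0$; in particular, $M_k(\epsilon) := u_k(R^* - \epsilon) + v_k(R^* - \epsilon)$ can be made arbitrarily large by first choosing $\epsilon$ small and then $k$ large.

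The decisive ingredient will be a Keller--Osserman-type blow-up estimate for the system. I would reuse the subharmonic functional of Theorem \ref{thm:existence}, combined with the multiplicative subadditivity F2, which converts mixed terms of the type $g(v) f(u)$ into composed quantities of the form appearing in F3, to produce the following single-variable comparison: there exists $\delta : (0, \infty) \to (0, \infty)$ with $\delta(M) \to 0$ as $M \to \infty$ such that every nondecreasing solution $(u, v)$ of \eqref{eq:system} with $u(R_0) + v(R_0) \geq M$ fails to extend past $R_0 + \delta(M)$. The hypothesis that $\min(p, q)$ is not compactly supported supplies, for each $R_0 > 0$, a subinterval to the right of $R_0$ on which both weights are bounded below by a positive constant; this is what forces the subharmonic functional to increase there and drives the blow-up.

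Granted such an estimate, applying it at $R_0 = R^* - \epsilon$ with $M = M_k(\epsilon)$ and choosing $\epsilon$ small enough and then $k$ large enough so that $\delta(M_k(\epsilon)) < \epsilon$, the solutions $(u_k, v_k)$ must blow up before $R^*$, contradicting $(a_k, b_k) \in S$. Hence $(a^*, b^*) \in S$, and $S$ is closed. The hard part will be the blow-up estimate: because F3 is stated through the compositions $g \circ f$ and $f \circ g$ rather than through $f$ and $g$ individually, a naive scalar comparison based on $u + v$ alone does not yield a finite Keller--Osserman integrand, and one must invoke the paper's new subharmonic functional together with F2 to translate the composed integrals into a usable blow-up radius that is uniform in the approximating sequence.
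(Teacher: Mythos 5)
Your handling of nonemptiness agrees with the paper and is fine. For closedness, however, you take a genuinely different route from the paper, and it has a gap that I do not think can be closed without effectively redoing the paper's argument.

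The paper proves closedness \emph{directly}: it compares the entire solutions $(u_k,v_k)$ against a fixed blow-up supersolution of the decoupled scalar problem \eqref{eq:z-scalar} (this is possible because the $a_k,b_k$ are bounded, so one may take $c,d$ large enough to dominate them all), obtaining a bound uniform in $k$ on every compact $[0,m]$; the integral representation then gives uniform $C^1$ bounds, Arzel\`a--Ascoli plus a diagonal argument produce a subsequential limit $(u,v)$, and passing to the limit in the Volterra form \eqref{eq:u-int}--\eqref{eq:v-int} shows that $(u,v)$ is an entire positive radial solution with central value $(a_0,b_0)$. No uniqueness, no continuous dependence, and no blow-up estimate are needed.

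You instead argue by contradiction, relying on (i) continuous dependence on initial data and (ii) a uniform Keller--Osserman blow-up estimate stating that $u(R_0)+v(R_0)\ge M$ forces non-extendability past $R_0+\delta(M)$ with $\delta(M)\to 0$. Both ingredients are problematic in the paper's setting.

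Concerning (ii): as you state it, the estimate is quantified over all $R_0>0$, and this cannot hold under the standing hypotheses. Theorem~\ref{thm:large} produces entire \emph{large} solutions, for which $u(R_0)+v(R_0)\to\infty$ as $R_0\to\infty$ while the solution extends to all of $[0,\infty)$. Any such solution falsifies your estimate once $M$ exceeds $u(1)+v(1)$. The structural obstruction is the decay hypothesis \eqref{eq:decay-pq}: $\int_0^\infty s\,p(s)\,ds<\infty$ and $\int_0^\infty s\,q(s)\,ds<\infty$ force the weights to fade out, and the condition that $\min(p,q)$ is not compactly supported only guarantees, for each $R_0$, \emph{some} interval to the right on which $\min(p,q)>0$; it gives no uniform lower bound, no uniform length, and in particular no control of how far to the right the interval lies or how fast its mass decays. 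So a $\delta(M)$ independent of $R_0$ does not exist. Even restricting $R_0\in(0,R^*]$ does not rescue the estimate: the weights may tend to zero as $r\uparrow R^*$ while still driving $(u^*,v^*)$ to blow up at $R^*$ (this is perfectly consistent with (PQ)), and in that case the blow-up lag from a given $R_0<R^*$ is governed by both $M$ and $R^*-R_0$, not by $M$ alone.

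Concerning (i): Lemma~\ref{lem:local} is proved by Schauder's fixed point theorem, which gives existence but neither uniqueness nor continuous dependence. The hypotheses only make $f,g$ differentiable on $(0,\infty)$, so when $a^*=0$ or $b^*=0$ the vector field need not be locally Lipschitz at the initial point, and "the" maximal solution $(u^*,v^*)$ and the $C^1$ convergence $(u_k,v_k)\to(u^*,v^*)$ are not justified. The paper's argument sidesteps this by extracting a subsequential limit via Arzel\`a--Ascoli rather than appealing to uniqueness.

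In short, the blow-up estimate you defer as "the hard part" is the step that fails, and the tools the paper supplies (the forcing inequality of Lemma~\ref{lem:subharmonic}, the transforms $\Phi,\Psi$, the comparison problem \eqref{eq:z-scalar}) are used in the paper precisely to get the \emph{upper} bound needed for the direct compactness argument, not the lower blow-up estimate your approach requires.
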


\begin{theorem}[Largeness on the edge]
\label{thm:large} If $(a,b)\in E=\partial S\cap (0,\infty )^{2}$ and $(u,v)$
is an entire positive radial solution of \eqref{eq:system} with $u(0)=a$, $%
v(0)=b$, then%
\begin{equation*}
\lim_{r\rightarrow \infty }u(r)=\lim_{r\rightarrow \infty }v(r)=\infty .
\end{equation*}
\end{theorem}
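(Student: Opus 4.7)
My approach is by contradiction: supposing $(u,v)$ is bounded, I will show a full $2$-dimensional neighborhood of $(a,b)$ lies in $S$, contradicting $(a,b)\in \partial S$. From Theorem~\ref{thm:existence} we have $u'(r), v'(r) \geq 0$, so $u,v$ are non-decreasing and admit limits $U, V \in (0, \infty]$. The radial integral representation
\[
u(r) = a + \int_0^r t^{1-n}\int_0^t s^{n-1} p(s)\,g(v(s))\,ds\,dt \leq a + g(V)\mathcal{L}_p,
\]
together with the symmetric bound $V \leq b + f(U)\mathcal{L}_q$, yields the dichotomy $U < \infty \iff V < \infty$. Negating largeness thus reduces to the case where both limits are finite, so henceforth I assume $(u,v)$ uniformly bounded on $[0,\infty)$.

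The next step is to verify that $S$ is a lower set in the componentwise order: for $(a',b')\in (0,a]\times (0,b]$, the simultaneous monotone iteration
\begin{align*}
u_{k+1}(r) &= a' + \int_0^r t^{1-n}\!\int_0^t s^{n-1} p(s)\,g(v_k(s))\,ds\,dt, \\
v_{k+1}(r) &= b' + \int_0^r t^{1-n}\!\int_0^t s^{n-1} q(s)\,f(u_k(s))\,ds\,dt,
\end{align*}
starting from constants $u_0\equiv a'$, $v_0\equiv b'$, is monotone increasing and dominated pointwise by $(u,v)$ (by induction using $f,g$ non-decreasing and $a'\leq a, b'\leq b$). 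Monotone convergence yields an entire positive solution with central values $(a',b')$, so $(a',b') \in S$ and $(0,a]\times(0,b] \subset S$. To contradict $(a,b)\in \partial S$ I still need to exhibit $(a+\delta,b+\delta) \in S$ for some small $\delta>0$. This I would do by invoking the subharmonic functional developed in the proof of Theorem~\ref{thm:existence}, tailored to the reciprocal integrals in (F3), to establish an a priori bound of the form $u_\delta \leq U + C\delta$, $v_\delta \leq V + C\delta$ for iterates associated with the perturbed central values $(a+\delta,b+\delta)$. Such an estimate closes the iteration on $[0,\infty)$ and produces a bounded entire solution, giving $(a+\delta,b+\delta) \in S$. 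Combined with the lower-set property, the closed rectangle $[0,a+\delta]\times[0,b+\delta]$ lies in $S$ and contains an open ball around $(a,b)$, contradicting $(a,b)\in \partial S$.

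The main obstacle is establishing this upper-right extension. A crude constant supersolution $(M,N)$ would have to satisfy $M \geq (a+\delta) + g(N)\mathcal{L}_p$ and $N \geq (b+\delta) + f(M)\mathcal{L}_q$, but the rapid growth of $f,g$ under (F3) generally prevents this algebraic system from having a solution. The strength of the subharmonic functional is precisely that it bypasses this global supersolution issue: it yields pointwise control of the iterates in terms of $(u,v)$ and $\delta$, rather than a crude sup-norm bound, and this is where the Keller--Osserman-type integrability condition (F3) is genuinely used.
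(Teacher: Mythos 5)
Your contradiction strategy is a genuinely different route from the paper's. The paper proves largeness directly: it chooses $(a_k,b_k)\notin S$ converging to $(a,b)$, invokes Lemma~\ref{lem:alternative} to obtain finite blow-up radii $R_k$ with $R_k\to\infty$, integrates the forcing inequality \eqref{eq:W-int} twice, and inverts the strictly decreasing transforms $\Phi,\Psi$ to get lower bounds $u_k(r)\geq \Phi^{-1}(\cdot)$, $v_k(r)\geq \Psi^{-1}(\cdot)$, from which blow-up follows on passage to the limit $k\to\infty$, $r\to\infty$. Within your plan, the dichotomy $U<\infty\Leftrightarrow V<\infty$ and the lower-set property of $S$ via monotone iteration are both correct.

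The step you yourself flag as ``the main obstacle'' is, however, a genuine gap, and the tool you invoke points in the wrong direction. Lemma~\ref{lem:subharmonic} gives
\[
\Phi''(u)+\tfrac{n-1}{r}\,\Phi'(u)\ \geq\ -p(r)\,g\!\left(\tfrac{b}{f(a)}+\mathcal{L}_q\right),
\]
and integrating this produces an \emph{upper} bound on $\Phi(u)$, hence (since $\Phi$ is strictly decreasing) a \emph{lower} bound on $u$; it cannot produce an estimate of the form $u_\delta\leq U+C\delta$. (The comparison with the auxiliary barrier $(z_1,z_2)$ from the proof of Theorem~\ref{thm:existence} does not help either: $z_1,z_2$ are entire but tend to $\infty$ as $r\to\infty$, so that bound does not keep the iterates uniformly close to $(u,v)$.) Worse, a uniform Lipschitz-in-initial-data bound of the claimed form is structurally impossible at a boundary point of $S$: by definition of $\partial S$ there exist initial data arbitrarily close to $(a,b)$ whose local solutions blow up at a finite radius, so $\sup_r u_\delta=+\infty$ for those data, while under your boundedness hypothesis $\sup_r u<\infty$; no inequality $u_\delta\leq U+C\delta$ can accommodate that jump. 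In effect, the ``upper-right extension'' \emph{is} the theorem restated contrapositively, so it cannot stand as an unverified intermediate assertion. A completed version of your argument would need a genuinely new ingredient --- for instance, exploiting that for a bounded entire solution the residual weight mass $\int_R^\infty s\,p(s)\,ds$, $\int_R^\infty s\,q(s)\,ds$ is arbitrarily small for $R$ large and continuing a slightly perturbed solution past $R$ --- but nothing of that sort is supplied in the proposal.
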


\textbf{Paper structure.} Section \ref{2} contains some preliminaries,
including the radial formulation of the system, local existence results, and
the Keller--Osserman transform, which will be used in the proofs. Section %
\ref{3} proves the existence of entire positive radial solutions for a
nonempty set of central values. Section \ref{4} shows that the set of all
central values of entire solutions is nonempty and closed. Section \ref{5}
establishes that solutions corresponding to boundary points of this set are
large, i.e., both components blow up at infinity. Compatibility between
Theorem~\ref{thm:large} and Theorem~2.4 of \cite{Covei2017} is given in
Section \ref{comp}. Section \ref{6} presents concluding remarks and possible
extensions. An Appendix contains an auxiliary proposition on the
integrability of compositions of the nonlinearities.

\section{Preliminaries: radial formulation, local existence and
Keller--Osserman transform\label{2}}

We begin by recalling that radial solutions of \eqref{eq:system} satisfy the
ODE system 
\begin{equation}
\left\{ 
\begin{array}{l}
u^{\prime \prime }+\frac{n-1}{r}u^{\prime }=p(r)\,g(v),\qquad r>0, \\ 
v^{\prime \prime }+\frac{n-1}{r}v^{\prime }=q(r)\,f(u),\qquad r>0, \\ 
u(0)=a,\quad v(0)=b,\quad u^{\prime }(0)=v^{\prime }(0)=0.%
\end{array}%
\right.  \label{eq:radial-ode}
\end{equation}

By integrating twice and using the regularity at $r=0$, we obtain the
equivalent integral formulation: 
\begin{align}
u(r)& =a+\int_{0}^{r}t^{1-n}\int_{0}^{t}s^{n-1}p(s)\,g\big(v(s)\big)\,ds\,dt,
\label{eq:u-int} \\
v(r)& =b+\int_{0}^{r}t^{1-n}\int_{0}^{t}s^{n-1}q(s)\,f\big(u(s)\big)\,ds\,dt.
\label{eq:v-int}
\end{align}%
Before proving our results, we note first that for $a,b\in \left( 0,\infty
\right) $ any solution of the system of integral equations \eqref{eq:u-int}--%
\eqref{eq:v-int} valid for all $r\geq 0$ will also be a positive entire
solution of \eqref{eq:system} but not necessarily \eqref{lim}.

\begin{lemma}[Local existence \protect\cite{PetersonWood2011}]
\label{lem:local} For any $a,b>0$ there exists $\rho =\rho (a,b)>0$ and a
positive solution $(u,v)\in C^{2}([0,\rho ))^{2}$ of \eqref{eq:radial-ode}
with the given central values. Moreover, $u^{\prime }(r)\geq 0$ and $%
v^{\prime }(r)\geq 0$ for all $r\in \lbrack 0,\rho )$.
\end{lemma}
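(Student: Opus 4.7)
The plan is to reformulate \eqref{eq:radial-ode} as the integral fixed-point system \eqref{eq:u-int}--\eqref{eq:v-int} on a small interval $[0,\rho]$, and to run a Banach contraction argument on a suitably chosen closed convex set. On $X=C([0,\rho];\R^{2})$ with the sup norm I would define the operator $T=(T_{1},T_{2})$ by
\[
T_{1}(u,v)(r)=a+\int_{0}^{r}t^{1-n}\int_{0}^{t}s^{n-1}p(s)\,g(v(s))\,ds\,dt,
\]
\[
T_{2}(u,v)(r)=b+\int_{0}^{r}t^{1-n}\int_{0}^{t}s^{n-1}q(s)\,f(u(s))\,ds\,dt.
\]
The apparent singularity of $t^{1-n}$ at the origin is harmless because the inner integral is $O(t^{n})$ near zero, so $T:X\to X$ is well-defined and $T(u,v)(0)=(a,b)$.

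Next I would restrict $T$ to the closed convex set
\[
K=\{(u,v)\in X:\ a\le u(r)\le 2a,\ b\le v(r)\le 2b\ \text{for all }r\in[0,\rho]\}.
\]
Continuity of $p,q,f,g$ and boundedness of the arguments yield a constant $M=M(a,b)$ controlling the integrands on $K$; a direct estimate then shows $T(K)\subset K$ for $\rho$ sufficiently small. Since $f,g\in C^{1}((0,\infty))$ are Lipschitz on the compact intervals $[a,2a]$ and $[b,2b]$, a further computation produces an estimate of the form $\|T(w_{1})-T(w_{2})\|_{\infty}\le C\rho^{2}\|w_{1}-w_{2}\|_{\infty}$; shrinking $\rho$ once more turns $T$ into a strict contraction, and the Banach fixed-point theorem delivers a unique fixed point $(u,v)\in K$, which is automatically positive by definition of $K$.

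Regularity and the monotonicity assertion then follow by bootstrapping: $u,v\in C^{1}([0,\rho))$ with
\[
u'(r)=r^{1-n}\int_{0}^{r}s^{n-1}p(s)\,g(v(s))\,ds,\qquad v'(r)=r^{1-n}\int_{0}^{r}s^{n-1}q(s)\,f(u(s))\,ds,
\]
both non-negative by the sign of the data, and with $u'(0)=v'(0)=0$. A second differentiation recovers the ODEs in \eqref{eq:radial-ode} and upgrades the pair to $C^{2}([0,\rho))^{2}$.

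The main obstacle I anticipate is not the contraction mechanics but the interplay between the singular weight $t^{1-n}$ and the requirement that $(u,v)$ remain uniformly bounded away from zero so the local Lipschitz bounds on $f$ and $g$ stay in force. Both difficulties are neutralised by working on the invariant box $K$ over a short enough interval, and the detailed estimates of this kind already appear in \cite{PetersonWood2011}, to which the lemma is attributed.
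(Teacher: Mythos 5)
Your argument is correct and reaches the same conclusion, but by a genuinely different fixed-point mechanism. The paper applies Schauder's fixed-point theorem: it takes the box $X=\{(u,v):\|(u,v)-(a,b)\|_\infty\le\min\{a,b\}\}$, checks $T(X)\subset X$ for small $\rho$, and then establishes that $T$ is continuous and compact (dominated convergence plus Arzel\`a--Ascoli), so a fixed point exists; no Lipschitz structure is invoked. You instead run a Banach contraction on the tighter invariant box $K=\{a\le u\le 2a,\ b\le v\le 2b\}$, exploiting that $f,g\in C^1((0,\infty))$ are locally Lipschitz on $[a,2a]$ and $[b,2b]$ (which is where positivity of $a,b$ enters) to get the $O(\rho^2)$ contraction estimate. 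The trade-off: Schauder needs only continuity of $f,g$ and so is slightly more robust to weakening (F1), whereas your contraction argument is shorter, avoids any compactness discussion, and yields local \emph{uniqueness} of the solution as a byproduct --- something the paper's Schauder route does not deliver at this step even though it is quietly relied upon later (the proof of Theorem~\ref{thm:existence} speaks of ``a unique local positive radial solution''). Your treatment of the $t^{1-n}$ singularity at the origin and the derivation of $u',v'\ge 0$ from the integral representation of the derivatives match the paper's intent.

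Two small points worth making explicit if you polish the write-up: (i) the invariance $T(K)\subset K$ uses that the double integrals are nonnegative, so the lower bounds $u\ge a$, $v\ge b$ are automatic and only the upper bounds require $\rho$ small; (ii) the fixed point is a priori only in $C([0,\rho])$, and the passage to $C^1$ and then $C^2$ via the displayed formula for $u',v'$ needs the observation that $r\mapsto r^{1-n}\int_0^r s^{n-1}p(s)g(v(s))\,ds$ extends continuously to $r=0$ with value $0$, which you assert and which is indeed an $O(r)$ estimate --- fine to state, but it is the crux of the bootstrap.
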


\begin{proof}
We work in the Banach space $C[0,\rho ]\times C[0,\rho ]$ with the norm%
\begin{equation*}
\Vert (u,v)\Vert _{\infty }:=\max \{\Vert u\Vert _{\infty },\Vert v\Vert
_{\infty }\}.
\end{equation*}%
Define the operator 
\begin{equation*}
T:C[0,\rho ]\times C[0,\rho ]\rightarrow C[0,\rho ]\times C[0,\rho ]
\end{equation*}
by%
\begin{equation*}
T(u,v)(r):=\left(
a+\int_{0}^{r}t^{1-n}\int_{0}^{t}s^{n-1}p(s)\,g(v(s))\,ds\,dt,\
b+\int_{0}^{r}t^{1-n}\int_{0}^{t}s^{n-1}q(s)\,f(u(s))\,ds\,dt\right) .
\end{equation*}%
Let%
\begin{equation*}
X:=\left\{ (u,v)\in C[0,\rho ]\times C[0,\rho ]\ \middle|\ \Vert
(u,v)-(a,b)\Vert _{\infty }\leq \min \{a,b\}\right\} .
\end{equation*}%
For $\rho >0$ sufficiently small, the continuity of $p$, $q$, $f$, $g$
ensures that $T(X)\subset X$. \ The mapping $T$ is continuous (by dominated
convergence) and compact (by Arzela-Ascoli, since the image of $X$ is
uniformly bounded and equicontinuous). By Schauder's fixed point theorem, $T$
has a fixed point $(u,v)\in X$, which is a $C^{2}$ solution of %
\eqref{eq:radial-ode} on $[0,\rho )$.

Finally, $p,q\geq 0$ and $f,g\geq 0$ imply $u^{\prime },v^{\prime }$ are
nondecreasing with $u^{\prime }(0)=v^{\prime }(0)=0$, hence $u^{\prime
},v^{\prime }\geq 0$ on $[0,\rho )$.
\end{proof}

We define the maximal existence radius

\begin{equation*}
R_{a,b}:=\sup \{\rho >0:\ \text{there is a positive solution of (\ref%
{eq:radial-ode}) on }[0,\rho )\}.
\end{equation*}

\begin{lemma}[Alternative \protect\cite{PetersonWood2011}]
\label{lem:alternative} Let $a,b>0$. If $R_{a,b}<\infty $, then%
\begin{equation*}
\lim_{r\uparrow R_{a,b}}u(r)=\lim_{r\uparrow R_{a,b}}v(r)=\infty .
\end{equation*}
\end{lemma}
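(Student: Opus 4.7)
The plan is to argue by contradiction, splitting into cases according to whether each of the monotone limits at $R_{a,b}$ is finite. Because $u'(r),v'(r)\ge 0$ on $[0,R_{a,b})$ (applying Lemma~\ref{lem:local} on compact subintervals of the maximal existence interval), both $u$ and $v$ are nondecreasing, so the one-sided limits $U:=\lim_{r\uparrow R_{a,b}}u(r)$ and $V:=\lim_{r\uparrow R_{a,b}}v(r)$ exist in $(0,\infty]$. I will rule out the scenarios $(U,V<\infty)$ and $(U<\infty,\,V=\infty)$, together with the symmetric one; the only remaining possibility $U=V=\infty$ is precisely the conclusion.

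\emph{First case: $U,V<\infty$.} Monotonicity of $f,g$ yields the pointwise bounds $g(v(s))\le g(V)$ and $f(u(s))\le f(U)$ on $[0,R_{a,b})$. Since $p,q$ are continuous on $[0,R_{a,b}]$ and the maps $t\mapsto t^{1-n}\int_0^t s^{n-1}p(s)\,ds$, $t\mapsto t^{1-n}\int_0^t s^{n-1}q(s)\,ds$ behave like $O(t)$ near $0$ and are continuous on $(0,R_{a,b}]$, the right-hand sides of \eqref{eq:u-int}-\eqref{eq:v-int} extend continuously to $r=R_{a,b}$ with finite limiting values $u(R_{a,b}),v(R_{a,b})$ and finite one-sided derivatives $u'(R_{a,b}),v'(R_{a,b})$. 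A Schauder fixed-point argument analogous to Lemma~\ref{lem:local}, but based at the regular point $r=R_{a,b}$ where the coefficient $(n-1)/r$ is no longer singular, then extends the solution to an interval $[0,R_{a,b}+\varepsilon)$, contradicting the maximality of $R_{a,b}$.

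\emph{Second case: $U<\infty$, $V=\infty$.} By monotonicity of $f$, one has $f(u(s))\le f(U)$ for every $s\in[0,R_{a,b})$. Substituting into \eqref{eq:v-int} gives
\[
v(r)\le b+f(U)\int_0^{R_{a,b}} t^{1-n}\int_0^t s^{n-1}q(s)\,ds\,dt<\infty
\]
uniformly for $r\in[0,R_{a,b})$, contradicting $V=\infty$. The symmetric case $U=\infty$, $V<\infty$ is excluded identically from \eqref{eq:u-int}. Hence $U=V=\infty$, which is the claim.

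The step I expect to be the main obstacle is the continuation argument in the first case, since Lemma~\ref{lem:local} is stated only at the singular base point $r=0$ and must be adapted to a positive base point. However, on any subinterval $[R_{a,b}-\delta,R_{a,b}+\varepsilon]$ with $\delta>0$ small, the coefficient $(n-1)/r$ is smooth and bounded, so the Schauder (or even Picard) construction with continuous right-hand sides $p\,g(v)$ and $q\,f(u)$ carries through without the technical care required near the origin; this places the obstacle in the realm of routine verification rather than genuine analytic difficulty.
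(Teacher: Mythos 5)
Your proof is correct and follows essentially the same strategy as the paper's: show that boundedness of one component forces boundedness of the other (you via the integral representation, the paper via the radial ODE inequality), and that joint boundedness permits extension past $R_{a,b}$ by a standard continuation argument, contradicting maximality. The explicit three-case decomposition is a slightly more verbose organization of the same reasoning.
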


\begin{proof}
Suppose, for contradiction, that $u$ remains bounded as $r\uparrow R_{a,b}$.
Then $f(u(r))$ is bounded, say by $M>0$. From \eqref{eq:radial-ode}, we have%
\begin{equation*}
v^{\prime \prime }(r)+\frac{n-1}{r}v^{\prime }(r)\leq q(r)M.
\end{equation*}%
Since $q$ is locally integrable, the right-hand side is locally bounded, so $%
v^{\prime }$ is locally Lipschitz and hence bounded on $[0,R_{a,b})$.
Integrating, $v$ is bounded on $[0,R_{a,b})$. Then $g(v)$ is bounded, and
the same argument applied to the $u$-equation shows $u^{\prime }$ is
bounded. Thus $(u,v)$ and their derivatives are bounded up to $R_{a,b}$, so
the local existence lemma allows extension beyond $R_{a,b}$, contradicting
maximality. Therefore both $u$ and $v$ must blow up as $r\uparrow R_{a,b}$.
\end{proof}

Next, we note that (\ref{eq:finite-recip}) entails%
\begin{equation}
L_{f}:=\int_{1}^{\infty }\frac{ds}{g(f(s))\,}<\infty ,\quad
L_{g}:=\int_{1}^{\infty }\frac{ds}{f(g(s))\,}<\infty ,
\label{eq:finite-recipo}
\end{equation}%
for further details, see \cite{Covei2009,Zhang2000}. We now introduce the 
\emph{Keller--Osserman-type transforms} 
\begin{equation*}
\Phi (t):=\int_{t}^{\infty }\frac{ds}{g(f(s))},\qquad \Psi
(t):=\int_{t}^{\infty }\frac{ds}{f(g(s))}.
\end{equation*}%
By the finite reciprocal integral conditions \eqref{eq:finite-recip}, these
integrals converge for all $t\geq 1$. Moreover, $\Phi $ and $\Psi $ extend
to $C^{1}$ functions on $(0,\infty )$, are strictly decreasing, and satisfy 
\begin{equation}
\Phi ^{\prime }(t)=-\frac{1}{g(f(t))}<0,\qquad \Psi ^{\prime }(t)=-\frac{1}{%
f(g(t))}<0,  \label{dec}
\end{equation}%
together with the convexity properties 
\begin{equation*}
\Phi ^{\prime \prime }(t)=\frac{g^{\prime }(f(t))\,f^{\prime }(t)}{%
[g(f(t))]^{2}}>0,\qquad \Psi ^{\prime \prime }(t)=\frac{f^{\prime
}(g(t))\,g^{\prime }(t)}{[f(g(t))]^{2}}>0,
\end{equation*}%
where the inequalities follow from the assumptions $f^{\prime },g^{\prime
}\geq 0$ and $f,g>0$ on $(0,\infty )$.

\begin{lemma}[Forcing inequality]
\label{lem:subharmonic} Let $a,b>0$ and $(u,v)$ be any positive radial $%
C^{2} $-solution of \eqref{eq:radial-ode}. Then, for every $r>0$, one has 
\begin{equation}
\left\{ 
\begin{array}{c}
u^{\prime \prime }(r)+\frac{n-1}{r}u^{\prime }(r)\ \leq \ p(r)\,g(f(u(r)))g(%
\frac{b}{f(a)}+\mathcal{L}_{q}) \\ 
v^{\prime \prime }(r)+\frac{n-1}{r}v^{\prime }(r)\ \leq q(r)\,f(g(v(r)))f(%
\frac{a}{g(b)}+\mathcal{L}_{p}).%
\end{array}%
\right.  \label{eq:W-ineq}
\end{equation}%
and consequently 
\begin{equation}
\Phi ^{\prime \prime }(u)+\frac{n-1}{r}\Phi ^{\prime }(u)\ \geq \
-p(r)\,g\!\left( \frac{b}{f(a)}+\mathcal{L}_{q}\right) ,\qquad \Psi ^{\prime
\prime }(v)+\frac{n-1}{r}\Psi ^{\prime }(v)\ \geq \ -q(r)\,f\!\left( \frac{a%
}{g(b)}+\mathcal{L}_{p}\right) .  \label{eq:W-int}
\end{equation}
\end{lemma}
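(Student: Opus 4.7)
The plan is to first establish the componentwise differential inequalities \eqref{eq:W-ineq} by exploiting the monotonicity of $u$ and $v$ (from Lemma~\ref{lem:local}) together with the multiplicative subadditivity (F2), and then to convert them into \eqref{eq:W-int} via a chain-rule computation applied to the Keller--Osserman transforms $\Phi(u)$ and $\Psi(v)$. The convexity properties $\Phi''>0$ and $\Psi''>0$ recorded just above the lemma are precisely what allow the unwanted quadratic terms $(u')^{2}$ and $(v')^{2}$ to be dropped.

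For the first line of \eqref{eq:W-ineq}, I would start from \eqref{eq:v-int}. Since $u'\geq 0$, for every $s\leq r$ one has $f(u(s))\leq f(u(r))$, and together with $\mathcal{Q}(r)\leq \mathcal{L}_{q}$ this yields
\[
v(r) \;\leq\; b + f(u(r))\,\mathcal{L}_{q}.
\]
Because $u(r)\geq a$ and $f$ is nondecreasing with $f(a)>0$, it follows that $b \leq \tfrac{b}{f(a)} f(u(r))$, hence
\[
v(r) \;\leq\; f(u(r)) \Bigl(\tfrac{b}{f(a)} + \mathcal{L}_{q}\Bigr).
\]
Applying $g$ (nondecreasing) and then (F2) gives $g(v(r)) \leq g(f(u(r)))\, g\bigl(\tfrac{b}{f(a)}+\mathcal{L}_{q}\bigr)$, and multiplying by $p(r)$ and using the $u$-equation of \eqref{eq:radial-ode} produces the first line of \eqref{eq:W-ineq}. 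The second line follows by the symmetric argument with $(u,f,p)$ and $(v,g,q)$ interchanged.

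To derive \eqref{eq:W-int}, I read the left-hand side as the radial Laplacian of the composition $r \mapsto \Phi(u(r))$: by the chain rule,
\[
\bigl(\Phi(u)\bigr)'' + \tfrac{n-1}{r}\bigl(\Phi(u)\bigr)'
\;=\; \Phi''(u)\,(u')^{2} \;+\; \Phi'(u)\Bigl(u''+\tfrac{n-1}{r}u'\Bigr).
\]
The first term is nonnegative since $\Phi''>0$, while the second equals $\Phi'(u)\,p(r)\,g(v) = -p(r)\,g(v)/g(f(u))$ by \eqref{dec} and \eqref{eq:radial-ode}. Inserting the bound $g(v)\leq g(f(u))\,g\bigl(\tfrac{b}{f(a)}+\mathcal{L}_{q}\bigr)$ obtained above causes the factor $g(f(u))$ to cancel, giving exactly the first inequality of \eqref{eq:W-int}. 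The statement for $\Psi(v)$ is obtained by the symmetric computation.

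The one delicate step is the linearization $v(r)\leq f(u(r))\bigl(\tfrac{b}{f(a)}+\mathcal{L}_{q}\bigr)$: it is what converts the coupling in the original system into a \emph{decoupled} subharmonic inequality for each component. It hinges on the monotonicity $u'\geq 0$ from Lemma~\ref{lem:local}, the finiteness $\mathcal{L}_{q}<\infty$ from (PQ), and the strict positivity $f(a)>0$ from (F1); absent any of these the constant dressing $g(f(u))$ would blow up or fail to satisfy (F2). After that, the Keller--Osserman transform essentially does the rest: its convexity eliminates the $(u')^{2}$ term, and its derivative $\Phi'=-1/g(f(\cdot))$ is designed precisely to cancel the factor $g(f(u))$ that (F2) has just produced.
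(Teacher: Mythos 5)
Your proof is correct and follows essentially the same approach as the paper: bound $v$ in terms of $f(u)$ via monotonicity and $\mathcal{L}_q<\infty$, use multiplicative subadditivity (F2) to decouple, then exploit the convexity of $\Phi,\Psi$ to drop the $(u')^2,(v')^2$ terms. The only cosmetic differences are that you absorb $b$ into the factor $f(u(r))$ before applying $g$ (the paper applies (F2) first and then uses monotonicity of $g$ a second time), and in deriving \eqref{eq:W-int} you substitute the bound on $g(v)$ directly into the original radial equation rather than passing through \eqref{eq:W-ineq}; both routes are equivalent.
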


\begin{proof}
We start from the radial form of the system \eqref{eq:radial-ode}:%
\begin{equation*}
u^{\prime \prime }(r)+\frac{n-1}{r}u^{\prime }(r)=p(r)\,g(v(r)),\qquad
v^{\prime \prime }(r)+\frac{n-1}{r}v^{\prime }(r)=q(r)\,f(u(r)).
\end{equation*}%
From the integral representation \eqref{eq:u-int}--\eqref{eq:v-int} and the
monotonicity of $u$ and $v$ (Lemma~\ref{lem:local}), we have for all $r>0$:%
\begin{equation*}
v(r)=b+\int_{0}^{r}t^{1-n}\int_{0}^{t}s^{n-1}q(s)\,f(u(s))\,ds\,dt.
\end{equation*}%
Since $u(s)\geq u(0)=a>0$ for all $s\geq 0$ by monotonicity, and $f$ is nondecreasing, we obtain%
\begin{equation*}
v(r)\leq b+f(u(r))\int_{0}^{r}t^{1-n}\int_{0}^{t}s^{n-1}q(s)\,ds\,dt.
\end{equation*}%
By the decay assumption \eqref{eq:decay-pq}, the double integral converges:%
\begin{equation*}
\mathcal{L}_{q}:=\frac{1}{n-2}\int_{0}^{\infty }s^{1-n}\left(
\int_{0}^{s}\tau ^{n-1}q(\tau )\,d\tau \right) ds=\lim_{r\to\infty}\mathcal{Q}(r)<\infty.
\end{equation*}%
Therefore, for all $r\geq 0$,%
\begin{equation}
v(r)\leq b+\mathcal{L}_{q}\,f(u(r)).
\label{eq:v-bound-by-u}
\end{equation}%
Since $g$ is nondecreasing and positive, we estimate%
\begin{equation*}
g(v(r))\leq g\!\left( b+\mathcal{L}_{q}\,f(u(r))\right) .
\end{equation*}%
Now we apply the multiplicative subadditivity property (F2). Writing%
\begin{equation*}
b+\mathcal{L}_{q}\,f(u(r))=f(u(r))\cdot\left(\frac{b}{f(u(r))}+\mathcal{L}_{q}\right),
\end{equation*}%
and noting that $f(u(r))>0$ for $u(r)>0$, we invoke (F2) with $l=f(u(r))$ and $t=\frac{b}{f(u(r))}+\mathcal{L}_{q}$ to obtain%
\begin{equation*}
g\!\left( b+\mathcal{L}_{q}\,f(u(r))\right)=g\!\left(f(u(r))\cdot\left(\frac{b}{f(u(r))}+\mathcal{L}_{q}\right)\right)\leq g(f(u(r)))\cdot g\!\left( \frac{b}{f(u(r))}+\mathcal{L}_{q}\right) .
\end{equation*}%
Since $u(r)\geq a$ for all $r\geq 0$ by monotonicity, and $f$ is nondecreasing, we have $f(u(r))\geq f(a)$, hence%
\begin{equation*}
\frac{b}{f(u(r))}\leq\frac{b}{f(a)},
\end{equation*}%
and thus by monotonicity of $g$,%
\begin{equation*}
g\!\left(\frac{b}{f(u(r))}+\mathcal{L}_{q}\right)\leq g\!\left(\frac{b}{f(a)}+\mathcal{L}_{q}\right).
\end{equation*}%
Combining these estimates, we obtain%
\begin{equation*}
g(v(r))\ \leq \ g(f(u(r)))\cdot g\!\left( \frac{b}{f(a)}+\mathcal{L}%
_{q}\right) .
\end{equation*}%
Substituting into the $u$-equation yields the first inequality in %
\eqref{eq:W-ineq}:%
\begin{equation*}
u^{\prime \prime }(r)+\frac{n-1}{r}u^{\prime }(r)=p(r)g(v(r))\ \leq \
p(r)\,g(f(u(r)))\,g\!\left( \frac{b}{f(a)}+\mathcal{L}_{q}\right) .
\end{equation*}%
By complete symmetry, we establish the bound for $u$:%
\begin{equation*}
u(r)\leq a+\mathcal{L}_{p}\,g(v(r)),
\end{equation*}%
where $\mathcal{L}_{p}$ is defined in \eqref{eq:decay-pq}. Applying the multiplicative subadditivity of $f$ and monotonicity of $f$, $v(r)\geq b$ implies%
\begin{equation*}
f(u(r))\leq f(a+\mathcal{L}_{p}g(v(r)))\leq f(g(v(r)))f\left(\frac{a}{g(v(r))}+\mathcal{L}_{p}\right)\leq f(g(v(r)))f\left(\frac{a}{g(b)}+\mathcal{L}_{p}\right),
\end{equation*}%
yielding the second inequality in \eqref{eq:W-ineq}:%
\begin{equation*}
v^{\prime \prime }(r)+\frac{n-1}{r}v^{\prime }(r)\ \leq \
q(r)\,f(g(v(r)))\,f\!\left( \frac{a}{g(b)}+\mathcal{L}_{p}\right) .
\end{equation*}

\medskip We now derive \eqref{eq:W-int} from \eqref{eq:W-ineq}. Recall that $\Phi(t)=\int_t^\infty\frac{ds}{g(f(s))}$ is $C^1$ on $(0,\infty)$ with%
\begin{equation*}
\Phi ^{\prime }(t)=-\frac{1}{g(f(t))}<0.
\end{equation*}%
Applying the chain rule to $\Phi\circ u$, we obtain%
\begin{equation*}
\frac{d}{dr}\Phi(u(r))=\Phi ^{\prime }(u(r))\,u^{\prime }(r)=-\frac{u^{\prime }(r)}{g(f(u(r)))},
\end{equation*}%
and differentiating a second time,%
\begin{align*}
\frac{d^2}{dr^2}\Phi(u(r))&=\Phi ^{\prime \prime }(u(r))(u^{\prime }(r))^2+\Phi ^{\prime }(u(r))u^{\prime \prime }(r)\\
&=-\frac{u^{\prime \prime }(r)}{g(f(u(r)))}+\frac{d}{dt}\left[-\frac{1}{g(f(t))}\right]_{t=u(r)}(u^{\prime }(r))^{2}\\
&=-\frac{u^{\prime \prime }(r)}{g(f(u(r)))}+\frac{g^{\prime }(f(u(r)))\,f^{\prime }(u(r))}{[g(f(u(r)))]^{2}}\,(u^{\prime }(r))^{2}.
\end{align*}%
By assumption (F1), $f^{\prime },g^{\prime }\geq 0$ and $u^{\prime}\geq 0$, so the second term is nonnegative. Hence%
\begin{equation*}
\Phi ^{\prime \prime }(u(r))\ \geq \ -\frac{u^{\prime \prime }(r)}{g(f(u(r)))}.
\end{equation*}%
Moreover,%
\begin{equation*}
\frac{n-1}{r}\Phi ^{\prime }(u(r))=-\frac{n-1}{r}\,\frac{u^{\prime }(r)}{g(f(u(r)))}.
\end{equation*}%
Adding these two inequalities gives%
\begin{equation}
\Phi ^{\prime \prime }(u(r))+\frac{n-1}{r}\Phi ^{\prime }(u(r))\ \geq \ -\frac{%
u^{\prime \prime }(r)+\frac{n-1}{r}u^{\prime }(r)}{g(f(u(r)))}.
\label{eq:Phi-comp-step}
\end{equation}%
From the first inequality in \eqref{eq:W-ineq}, we have%
\begin{equation*}
u^{\prime \prime }(r)+\frac{n-1}{r}u^{\prime }(r)\leq p(r)\,g(f(u(r)))\,g\!\left( \frac{b}{f(a)}+\mathcal{L}_{q}\right).
\end{equation*}%
Substituting this into \eqref{eq:Phi-comp-step} and noting that $g(f(u(r)))$ cancels in the numerator and denominator, we deduce%
\begin{equation*}
\Phi ^{\prime \prime }(u(r))+\frac{n-1}{r}\Phi ^{\prime }(u(r))\ \geq \
-\frac{p(r)\,g(f(u(r)))\,g\!\left( \frac{b}{f(a)}+\mathcal{L}_{q}\right)}{g(f(u(r)))}=-p(r)\,g\!\left( \frac{b}{f(a)}+\mathcal{L}_{q}\right) .
\end{equation*}%
This is the first inequality in \eqref{eq:W-int}.

By complete symmetry, we have for $\Psi(t)=\int_t^\infty\frac{ds}{f(g(s))}$:%
\begin{equation*}
\Psi ^{\prime }(v(r))=-\frac{v^{\prime}(r)}{f(g(v(r)))},
\end{equation*}%
and%
\begin{equation*}
\Psi ^{\prime \prime }(v(r))=-\frac{v^{\prime \prime }(r)}{f(g(v(r)))}+\frac{f^{\prime }(g(v(r)))\,g^{\prime}(v(r))}{[f(g(v(r)))]^{2}}\,(v^{\prime}(r))^{2}\ \geq \ -\frac{v^{\prime \prime }(r)}{f(g(v(r)))}.
\end{equation*}%
Thus%
\begin{equation*}
\Psi ^{\prime \prime }(v(r))+\frac{n-1}{r}\Psi ^{\prime }(v(r))\ \geq \ -\frac{v^{\prime\prime}(r)+\frac{n-1}{r}v^{\prime}(r)}{f(g(v(r)))}.
\end{equation*}%
Using the second inequality in \eqref{eq:W-ineq} and canceling $f(g(v(r)))$ yields%
\begin{equation*}
\Psi ^{\prime \prime }(v(r))+\frac{n-1}{r}\Psi ^{\prime }(v(r))\ \geq \
-q(r)\,f\!\left( \frac{a}{g(b)}+\mathcal{L}_{p}\right) ,
\end{equation*}%
which is the second inequality in \eqref{eq:W-int}. This completes the proof.
\end{proof}

\section{Existence of entire positive solutions\label{3}}

Let $c,d\in (0,\infty )$ be fixed. We define the set of \emph{admissible
central values} by 
\begin{equation*}
\mathcal{T}:=\{(a,b)\in (0,\infty )^{2}\ :\ a<c\ \text{and}\ b<d\}.
\end{equation*}%
Consider the auxiliary scalar radial problem 
\begin{equation}
\left\{ 
\begin{array}{l}
z_{1}^{\prime \prime }+\frac{n-1}{r}z_{1}^{\prime }=p(r)g(\frac{b}{f(a)}+%
\mathcal{L}_{q})\,g\left( f\left( z_{1}\right) \right) , \\ 
z_{2}^{\prime \prime }+\frac{n-1}{r}z_{2}^{\prime }=q(r)f(\frac{a}{g(b)}+%
\mathcal{L}_{p})f\left( g\left( z_{2}\right) \right) , \\ 
z_{i}^{\prime }(0)=0,\ z_{1}(0)=c>0\text{, }z_{2}(0)=d>0.%
\end{array}%
\right.  \label{eq:z-scalar}
\end{equation}%
where $\mathcal{L}_{p},\mathcal{L}_{q}$ are the finite constants

\begin{equation*}
\mathcal{L}_{p}:=\int_{0}^{\infty }s^{1-n}\left( \int_{0}^{s}\tau
^{n-1}p(\tau )\,d\tau \right) ds,\quad \mathcal{L}_{q}:=\int_{0}^{\infty
}s^{1-n}\left( \int_{0}^{s}\tau ^{n-1}q(\tau )\,d\tau \right) ds,
\end{equation*}%
which are finite by the decay assumptions \eqref{eq:decay-pq}.

By the decay assumptions \eqref{eq:decay-pq} and the Keller-Osserman-type
condition inherited from \eqref{eq:finite-recip}, there exists a positive
entire radial solution $\left( z_{1},z_{2}\right) $ of \eqref{eq:z-scalar}
which is large (i.e. $z_{1}(r)\rightarrow \infty $ and $z_{2}(r)\rightarrow
\infty $, as $r\rightarrow \infty $) see \cite{Lair1999} or \cite%
{Covei2009,Zhang2000}.

\begin{proof}[Proof of Theorem~\protect\ref{thm:existence}]
Let $(a,b)\in \mathcal{T}$ be fixed, i.e., $0<a<c$ and $0<b<d$. By Lemma~\ref{lem:local}, there
exists $\rho >0$ and a unique local positive radial solution $(u,v)\in
C^{2}([0,\rho ))^{2}$ of \eqref{eq:system} satisfying the integral equations \eqref{eq:u-int}--\eqref{eq:v-int} with initial values%
\begin{equation*}
u(0)=a,\quad v(0)=b,\quad u^{\prime }(0)=v^{\prime }(0)=0.
\end{equation*}%
Moreover, by Lemma~\ref{lem:local}, $u^{\prime}(r)\geq 0$ and $v^{\prime}(r)\geq 0$ for all $r\in[0,\rho)$, so $u$ and $v$ are nondecreasing.

Define the \emph{maximal existence radius}%
\begin{equation*}
R:=\sup \left\{ \rho >0\ :\ (u,v)\ \text{exists as a positive $C^2$ solution on }[0,\rho
]\right\} .
\end{equation*}%
By Lemma~\ref{lem:local}, $R>0$. If $R=\infty $, then $(u,v)$ is already an entire positive radial solution
and we are done. 

We now prove that $R=\infty$ by contradiction. Suppose that $R<\infty $.

\medskip \noindent \textbf{Step 1: Comparison with }$\left(
z_{1},z_{2}\right) $\textbf{.} We claim that 
\begin{equation}
u(r)<z_{1}(r)\quad \text{and}\quad v(r)<z_{2}(r),\quad \forall r\in \lbrack
0,R].  \label{eq:uv-le-z}
\end{equation}%
\smallskip \emph{Initial ordering.} At $r=0$ we have%
\begin{equation*}
u(0)=a<c=z_{1}(0),\quad v(0)=b<d=z_{2}(0).
\end{equation*}%
Thus the inequalities in \eqref{eq:uv-le-z} hold at $r=0$.

\smallskip \emph{Definition of $R_{0}$.} Let%
\begin{equation*}
R_{0}:=\sup \left\{ \rho \in (0,R]\ :\ u(r)<z_{1}(r)\ \text{and}\
v(r)<z_{2}(r)\text{,}\ \forall r\in \lbrack 0,\rho ]\right\} .
\end{equation*}

Clearly $R_0 > 0$ by continuity.

If $R_0 = R$, then \eqref{eq:uv-le-z} is proved. Suppose instead that $R_0 <
R$.

\smallskip \emph{Integral formulation and comparison.} For $r\in \lbrack
0,R_{0}]$, we have from the integral formulation \eqref{eq:u-int}--\eqref{eq:v-int}:%
\begin{align*}
u(r)&=a+\int_{0}^{r}t^{1-n}\int_{0}^{t}s^{n-1}p(s)\,g(v(s))\,ds\,dt,\\
v(r)&=b+\int_{0}^{r}t^{1-n}\int_{0}^{t}s^{n-1}q(s)\,f(u(s))\,ds\,dt.
\end{align*}%
By Lemma~\ref{lem:subharmonic}, specifically inequality \eqref{eq:W-ineq}, we have for all $s\in[0,R_0]$:%
\begin{equation*}
g(v(s))\leq g(f(u(s)))\,g\!\left( \frac{b}{f(a)}+\mathcal{L}_{q}\right).
\end{equation*}%
Therefore, for any $r\in[0,R_0]$,%
\begin{align}
u(r)&=a+\int_{0}^{r}t^{1-n}\int_{0}^{t}s^{n-1}p(s)\,g(v(s))\,ds\,dt\notag\\
&\leq a+\int_{0}^{r}t^{1-n}\int_{0}^{t}s^{n-1}p(s)\,g(f(u(s)))\,g\!\left( \frac{b}{f(a)}+\mathcal{L}_{q}\right)\,ds\,dt.\label{eq:u-comp-int}
\end{align}%
Recall that $z_1$ satisfies (see \eqref{eq:z-scalar}):%
\begin{equation*}
z_{1}(r)=c+\int_{0}^{r}t^{1-n}\int_{0}^{t}s^{n-1}p(s)\,g(\frac{b}{f(a)}+\mathcal{L}_{q})\,g(f(z_{1}(s)))\,ds\,dt.
\end{equation*}%
Since $u(s)<z_{1}(s)$ for all $s\in[0,R_{0})$ and $g\circ f$ is nondecreasing (by (F1)), we have%
\begin{equation*}
g(f(u(s)))<g(f(z_{1}(s)))\quad\text{for all }s\in(0,R_0).
\end{equation*}%
Moreover, $a<c$ by assumption. Therefore, for $r\in(0,R_0]$, from \eqref{eq:u-comp-int}:%
\begin{align*}
u(r)&\leq a+g\!\left( \frac{b}{f(a)}+\mathcal{L}_{q}\right)\int_{0}^{r}t^{1-n}\int_{0}^{t}s^{n-1}p(s)\,g(f(u(s)))\,ds\,dt\\
&< c+g\!\left( \frac{b}{f(a)}+\mathcal{L}_{q}\right)\int_{0}^{r}t^{1-n}\int_{0}^{t}s^{n-1}p(s)\,g(f(z_{1}(s)))\,ds\,dt\\
&=z_{1}(r),
\end{align*}%
where the strict inequality uses $a<c$ and $g(f(u(s)))<g(f(z_1(s)))$ for $s\in(0,r]$. In particular, taking $r=R_{0}$, we obtain%
\begin{equation*}
u(R_{0})<z_{1}(R_{0}).
\end{equation*}

By exactly the same reasoning, using Lemma~\ref{lem:subharmonic} to obtain%
\begin{equation*}
f(u(s))\leq f(g(v(s)))\,f\!\left( \frac{a}{g(b)}+\mathcal{L}_{p}\right),
\end{equation*}%
and noting that $v(s)<z_2(s)$ for $s\in[0,R_0)$ implies $f(g(v(s)))<f(g(z_2(s)))$, and $b<d$, we deduce%
\begin{align*}
v(r)&=b+\int_{0}^{r}t^{1-n}\int_{0}^{t}s^{n-1}q(s)\,f(u(s))\,ds\,dt\\
&\leq b+f\!\left( \frac{a}{g(b)}+\mathcal{L}_{p}\right)\int_{0}^{r}t^{1-n}\int_{0}^{t}s^{n-1}q(s)\,f(g(v(s)))\,ds\,dt\\
&< d+f\!\left( \frac{a}{g(b)}+\mathcal{L}_{p}\right)\int_{0}^{r}t^{1-n}\int_{0}^{t}s^{n-1}q(s)\,f(g(z_{2}(s)))\,ds\,dt\\
&=z_{2}(r).
\end{align*}%
In particular, at $r=R_0$,%
\begin{equation*}
v(R_{0})<z_{2}(R_{0}).
\end{equation*}%
Thus we have shown that%
\begin{equation*}
u(R_{0})<z_{1}(R_{0})\quad\text{and}\quad v(R_{0})<z_{2}(R_{0}).
\end{equation*}%
By the continuity of $u,v,z_1,z_2$, there exists $\delta >0$ such that%
\begin{equation*}
u(r)<z_1(r)\quad\text{and}\quad v(r)<z_2(r)\quad\text{for all }r\in[0,R_0+\delta],
\end{equation*}%
contradicting the supremum definition of $R_{0}$. Therefore, we must have $R_{0}=R$ and \eqref{eq:uv-le-z} holds for all $r\in[0,R]$.

\medskip \noindent \emph{Step 2: Extension beyond $R$.} From %
\eqref{eq:uv-le-z}, we have%
\begin{equation*}
u(r)\leq z_{1}(r)\quad\text{and}\quad v(r)\leq z_{2}(r)\quad\text{for all }r\in[0,R).
\end{equation*}%
Since $z_{1}$ and $z_{2}$ are entire positive radial solutions (hence finite on $[0,R]$ for any finite $R$), there exist constants $M_1,M_2>0$ such that%
\begin{equation*}
z_1(r)\leq M_1\quad\text{and}\quad z_2(r)\leq M_2\quad\text{for all }r\in[0,R].
\end{equation*}%
Therefore, by comparison,%
\begin{equation*}
u(r)\leq M_1\quad\text{and}\quad v(r)\leq M_2\quad\text{for all }r\in[0,R).
\end{equation*}%
Since $u$ and $v$ are nondecreasing (Lemma~\ref{lem:local}), the limits%
\begin{equation*}
\lim_{r\to R^-}u(r)=:u(R^-)\leq M_1<\infty,\quad \lim_{r\to R^-}v(r)=:v(R^-)\leq M_2<\infty
\end{equation*}%
exist and are finite. 

Moreover, from the radial equations \eqref{eq:radial-ode},%
\begin{equation*}
u^{\prime}(r)=r^{1-n}\int_0^r s^{n-1}p(s)g(v(s))\,ds,
\end{equation*}%
and since $v(s)\leq M_2$ for $s\in[0,R)$ and $g$ is continuous, we have $g(v(s))\leq g(M_2)$ for all $s\in[0,R)$. Thus%
\begin{equation*}
u^{\prime}(r)\leq r^{1-n}\int_0^r s^{n-1}p(s)g(M_2)\,ds\leq g(M_2)\mathcal{P}(R)<\infty,
\end{equation*}%
where $\mathcal{P}(R)<\infty$ by assumption (PQ). Similarly, $v^{\prime}(r)\leq f(M_1)\mathcal{Q}(R)<\infty$ for all $r\in[0,R)$. 

Therefore, $(u,v)$ and $(u^{\prime},v^{\prime})$ are uniformly bounded on $[0,R)$. By the fundamental theorem of calculus and uniform continuity, the limits%
\begin{equation*}
\lim_{r\to R^-}(u(r),u^{\prime}(r),v(r),v^{\prime}(r))
\end{equation*}%
exist and are finite. By the standard existence and uniqueness theory for ODEs (Lemma~\ref{lem:local} applied with initial data at $r=R$), the solution can be extended beyond $R$ to some interval $[0,R+\varepsilon)$ with $\varepsilon>0$. This contradicts the maximality of $R$. 

Hence, we conclude that $R=\infty$, and $(u,v)$ is an entire positive radial solution of \eqref{eq:system} with $u(0)=a$, $v(0)=b$, and $u^{\prime}(r)\geq 0$, $v^{\prime}(r)\geq 0$ for all $r\geq 0$.
\end{proof}

\section{The set of central values: Nonemptiness and closedness\label{4}}

We define the set of \emph{central values} for which the system admits an
entire positive radial solution as 
\begin{equation*}
S:=\Big\{(a,b)\in \lbrack 0,\infty )^{2}\ :\ \exists \ \text{an entire
positive radial solution $(u,v)$ of \eqref{eq:system} with }u(0)=a,\ v(0)=b%
\Big\}.
\end{equation*}

\begin{proof}[Proof of Theorem~\protect\ref{thm:S-properties}]
We give the proof in:

\textbf{Step 1: Nonemptiness.} By Theorem~\ref{thm:existence}, under our
standing assumptions, there exists a nonempty set $T\subset (0,\infty )^{2}$
of admissible central values for which \eqref{eq:system} has an entire
positive radial solution. Since $T\subset S$, we conclude $S\neq \varnothing 
$.

\medskip \textbf{Step 2: Closedness.} Let $(a_{k},b_{k})\in S$ be a sequence
converging to $(a_{0},b_{0})\in \lbrack 0,\infty )^{2}$. For each $k$, let $%
(u_{k},v_{k})$ denote the corresponding entire positive radial solution of (%
\ref{eq:u-int})-(\ref{eq:v-int}) defined by 
\begin{align*}
u_{k}(r)& =a_{k}+\int_{0}^{r}t^{1-n}\int_{0}^{t}s^{n-1}p(s)\,g\big(v_{k}(s)%
\big)\,ds\,dt, \\
v_{k}(r)& =b_{k}+\int_{0}^{r}t^{1-n}\int_{0}^{t}s^{n-1}q(s)\,f\big(u_{k}(s)%
\big)\,ds\,dt.
\end{align*}%
with%
\begin{equation*}
u_{k}(0)=a_{k},\quad v_{k}(0)=b_{k},\quad u_{k}^{\prime }(0)=v_{k}^{\prime
}(0)=0.
\end{equation*}

\smallskip \emph{Uniform bounds on compact intervals.} Fix $m\in \mathbb{N}$ arbitrarily. Since $(a_k,b_k)\to(a_0,b_0)$, the sequence $\{(a_k,b_k)\}$ is bounded. Without loss of generality, assume $a_k\leq A$ and $b_k\leq B$ for all $k$ and some constants $A,B>0$. 

For each $k$, the solution $(u_k,v_k)$ satisfies, by the proof of Theorem~\ref{thm:existence}, a comparison with auxiliary solutions $(z_{1,k},z_{2,k})$ solving:%
\begin{equation}
\begin{cases}
z_{1,k}^{\prime \prime }+\frac{n-1}{r}z_{1,k}^{\prime }=p(r)g(\frac{b_k}{f(a_k)}+\mathcal{L}_{q})\,g\left( f\left( z_{1,k}\right) \right) ,
\\ 
z_{1,k}(0)=c_k>a_k,\quad z_{1,k}^{\prime }(0)=0,
\end{cases}
\label{eq:z1k-comp}
\end{equation}%
and similarly for $z_{2,k}$. However, to obtain uniform bounds independent of $k$, we construct a single comparison solution that dominates all $(u_k,v_k)$ simultaneously.

Let $\tilde{a}=\sup_k a_k\leq A$ and $\tilde{b}=\sup_k b_k\leq B$ (both finite by boundedness). Choose constants $\bar{c}>\tilde{a}$ and $\bar{d}>\tilde{b}$. Consider the auxiliary scalar problems:%
\begin{equation*}
\begin{cases}
\bar{z}_{1}^{\prime \prime }+\frac{n-1}{r}\bar{z}_{1}^{\prime }=p(r)g(\frac{B}{f(\inf_k a_k)}+\mathcal{L}_{q})\,g\left( f\left( \bar{z}_{1}\right) \right) ,
\\ 
\bar{z}_{1}(0)=\bar{c},\quad \bar{z}_{1}^{\prime }(0)=0,
\end{cases}%
\end{equation*}%
and%
\begin{equation*}
\begin{cases}
\bar{z}_{2}^{\prime \prime }+\frac{n-1}{r}\bar{z}_{2}^{\prime }=q(r)f(\frac{A}{g(\inf_k b_k)}+\mathcal{L}_{p})f\left( g\left( \bar{z}_{2}\right) \right) ,
\\ 
\bar{z}_{2}(0)=\bar{d},\quad \bar{z}_{2}^{\prime }(0)=0.
\end{cases}%
\end{equation*}%
By the Keller-Osserman-type conditions (F3) and the decay assumptions (PQ), these problems admit entire positive radial solutions $\bar{z}_1$ and $\bar{z}_2$ (see \cite{Lair1999,Covei2009}). 

By the comparison principle (as in the proof of Theorem~\ref{thm:existence}), since $a_k<\bar{c}$ and $b_k<\bar{d}$ for all $k$, and the right-hand sides of the equations for $(u_k,v_k)$ are dominated by those for $(\bar{z}_1,\bar{z}_2)$ (due to monotonicity of $f,g$ and the choices of constants), we obtain%
\begin{equation*}
u_{k}(r)\leq \bar{z}_{1}(r),\quad v_{k}(r)\leq \bar{z}_{2}(r),\qquad \forall r\geq 0,\ \forall k\in\mathbb{N}.
\end{equation*}

In particular, for the fixed $m$, we have uniform bounds:%
\begin{equation*}
u_k(r)\leq \bar{z}_1(m)=:M_1(m)<\infty,\quad v_k(r)\leq \bar{z}_2(m)=:M_2(m)<\infty,\quad\forall r\in[0,m],\ \forall k.
\end{equation*}%
Thus $\{u_k\}$ and $\{v_k\}$ are uniformly bounded on $[0,m]$.

\smallskip \emph{$C^{1}$-bounds.} From the integral representation \eqref{eq:u-int}, we have%
\begin{equation*}
u_{k}^{\prime }\left( r\right)
=r^{1-n}\int_{0}^{r}s^{n-1}p(s)\,g(v_{k}(s))\,ds,\quad r\in(0,m].
\end{equation*}%
Since $v_k(s)\leq M_2(m)$ for all $s\in[0,m]$ and $g$ is continuous and nondecreasing, we have%
\begin{equation*}
g(v_k(s))\leq g(M_2(m))=:G_m<\infty,\quad\forall s\in[0,m],\ \forall k.
\end{equation*}%
Thus%
\begin{equation*}
u_k^{\prime}(r)\leq r^{1-n}\int_0^r s^{n-1}p(s)G_m\,ds\leq G_m\mathcal{P}(m)<\infty,\quad\forall r\in[0,m],\ \forall k,
\end{equation*}%
where $\mathcal{P}(m)<\infty$ by assumption (PQ). By symmetry,%
\begin{equation*}
v_k^{\prime}(r)\leq f(M_1(m))\mathcal{Q}(m)<\infty,\quad\forall r\in[0,m],\ \forall k.
\end{equation*}%
Hence $\{u_{k}\}$ and $\{v_{k}\}$ are uniformly bounded in $C^{1}([0,m])$.

Moreover, from the equations \eqref{eq:radial-ode}, for $r\in[\delta,m]$ with $\delta>0$,%
\begin{equation*}
u_k^{\prime\prime}(r)=-\frac{n-1}{r}u_k^{\prime}(r)+p(r)g(v_k(r))\leq \frac{n-1}{\delta}G_m\mathcal{P}(m)+\|p\|_{L^\infty([0,m])}G_m,
\end{equation*}%
so $\{u_k^{\prime\prime}\}$ is uniformly bounded on $[\delta,m]$. Similarly for $v_k^{\prime\prime}$. Thus $\{(u_k,v_k)\}$ is equicontinuous in $C^1$ on $[\delta,m]$ for any $\delta>0$.

\smallskip \emph{Compactness and passage to the limit.} By the Arzel\`{a}-Ascoli theorem, for each $m\in\mathbb{N}$ there exists a subsequence (denote it by $\{u_{k_j^{(m)}},v_{k_j^{(m)}}\}$) converging in $C^{1}([0,m])$ to some $(u^{(m)},v^{(m)})\in C^1([0,m])$. By a standard diagonal argument, we extract a subsequence $\{k_\ell\}_{\ell=1}^\infty$ (still denoted by $\{k\}$ for simplicity) such that%
\begin{equation*}
(u_{k},v_{k})\rightarrow (u,v)\quad \text{in }C_{\mathrm{loc}}^{1}([0,\infty)),
\end{equation*}%
i.e., $(u_k,v_k)\to(u,v)$ in $C^1([0,m])$ for every $m\in\mathbb{N}$.

We now show that $(u,v)$ is a solution of \eqref{eq:system}. Passing to the limit in the integral equations \eqref{eq:u-int}--\eqref{eq:v-int}, using the continuity of $f,g,p,q$ and the dominated convergence theorem (justified by the uniform bounds), we obtain for every $r\geq 0$:%
\begin{align*}
u(r)&=\lim_{k\to\infty}u_k(r)=\lim_{k\to\infty}\left[a_k+\int_0^r t^{1-n}\int_0^t s^{n-1}p(s)g(v_k(s))\,ds\,dt\right]\\
&=a_0+\int_0^r t^{1-n}\int_0^t s^{n-1}p(s)g(v(s))\,ds\,dt,
\end{align*}%
and similarly for $v$. Thus $(u,v)$ satisfies \eqref{eq:u-int}--\eqref{eq:v-int}, hence is a $C^2$ positive radial solution of \eqref{eq:system} with%
\begin{equation*}
u(0)=a_{0},\quad v(0)=b_{0},\quad u^{\prime}(0)=v^{\prime}(0)=0.
\end{equation*}

\smallskip \emph{Conclusion.} Since $(u,v)$ is entire and positive, $%
(a_{0},b_{0})\in S$. Thus $S$ contains all its limit points and is therefore
closed.
\end{proof}

\section{Largeness on the edge\label{5}}

\begin{proof}[Proof of Theorem~\protect\ref{thm:large}]
Let $(a,b)\in E=\partial S\cap(0,\infty)^2$ be a boundary point of $S$ in the positive quadrant, and let $(u,v)$ be an entire positive radial solution of \eqref{eq:system} with $u(0)=a$, $v(0)=b$. 

Since $S$ is closed (Theorem~\ref{thm:S-properties}), we have $\partial S\subset S$, so $(a,b)\in S$ and the solution $(u,v)$ exists and is entire.

\smallskip\textbf{Step 1: Approximating sequence.} Since $(a,b)\in\partial S$, there exists a sequence $(a_{k},b_{k})\notin S$ with $(a_{k},b_{k})\rightarrow(a,b)$ as $k\to\infty$, and $a_{k},b_{k}>0$ for all $k$. 

For each $k$, by Lemma~\ref{lem:local}, there exists a maximal radius $R_{k}\in(0,\infty]$ and a positive radial solution $(u_{k},v_{k})\in C^{2}([0,R_{k}))^{2}$ of \eqref{eq:system} with $u_{k}(0)=a_{k}$, $v_{k}(0)=b_{k}$, and $u_k^{\prime}(0)=v_k^{\prime}(0)=0$. 

Since $(a_k,b_k)\notin S$, the solution $(u_k,v_k)$ cannot be extended to all of $[0,\infty)$, hence $R_{k}<\infty$ for all $k$. By Lemma~\ref{lem:alternative},%
\begin{equation}
\lim_{r\rightarrow R_{k}^{-}}u_{k}(r)=\lim_{r\rightarrow
R_{k}^{-}}v_{k}(r)=\infty.
\label{eq:uk-vk-blowup}
\end{equation}

\smallskip\textbf{Step 2: The sequence $\{R_k\}$ is unbounded.} Suppose, for contradiction, that $\{R_{k}\}$ is bounded. Then there exists $R_\star<\infty$ such that $R_k\leq R_\star$ for all $k$. Passing to a subsequence if necessary, we may assume $R_k\to R_0\leq R_\star$ as $k\to\infty$.

By the proof of Theorem~\ref{thm:S-properties} (closedness), for any fixed $m<R_0$, the sequence $\{(u_k,v_k)\}$ is uniformly bounded on $[0,m]$ by a comparison solution independent of $k$ (since $(a_k,b_k)\to(a,b)$). Moreover, $\{(u_k,v_k)\}$ is equicontinuous on $[0,m]$ in $C^1$. By Arzelà-Ascoli, passing to a subsequence, $(u_k,v_k)\to(\tilde{u},\tilde{v})$ in $C^1_{\text{loc}}([0,R_0))$. Passing to the limit in the integral equations \eqref{eq:u-int}--\eqref{eq:v-int}, we find that $(\tilde{u},\tilde{v})$ is a solution on $[0,R_0)$ with $\tilde{u}(0)=a$, $\tilde{v}(0)=b$. 

But $(u,v)$ is the unique solution with these initial values (by uniqueness in Lemma~\ref{lem:local}), so $\tilde{u}=u$ and $\tilde{v}=v$ on $[0,R_0)$. Thus $u_k\to u$ and $v_k\to v$ locally uniformly on $[0,R_0)$.

However, from \eqref{eq:uk-vk-blowup}, $u_k(r)\to\infty$ as $r\to R_k^-\to R_0^-$, which would imply $u(r)\to\infty$ as $r\to R_0^-$, contradicting that $(u,v)$ is entire. Therefore, $\{R_{k}\}$ is unbounded, i.e., $R_{k}\rightarrow \infty$ as $k\to\infty$.

\smallskip\textbf{Step 3: Application of the Keller-Osserman transforms.} Now that we have established $R_k\to\infty$, we apply the transforms $\Phi$ and $\Psi$ to derive lower bounds on $u$ and $v$.

Fix $r>0$ arbitrarily. For $k$ sufficiently large, we have $r<R_{k}$, so $(u_k,v_k)$ is well-defined on $[0,r]$. 

By Lemma~\ref{lem:subharmonic}, specifically inequality \eqref{eq:W-int}, the functions $u_k$ and $v_k$ satisfy for all $\rho\in(0,R_k)$:%
\begin{equation}
\Phi ^{\prime \prime }(u_{k}(\rho))+\frac{n-1}{\rho}\Phi ^{\prime }(u_{k}(\rho))\
\geq \ -p(\rho)\,g\!\left( \frac{b_k}{f(a_k)}+\mathcal{L}_{q}\right),
\label{eq:phi-ineq-k}
\end{equation}%
and 
\begin{equation}
\Psi ^{\prime \prime }(v_{k}(\rho))+\frac{n-1}{\rho}\Psi ^{\prime }(v_{k}(\rho))\
\geq \ -q(\rho)\,f\!\left( \frac{a_k}{g(b_k)}+\mathcal{L}_{p}\right).
\label{eq:psi-ineq-k}
\end{equation}%
Since $(a_k,b_k)\to(a,b)$ and $f,g$ are continuous, the constants $\frac{b_k}{f(a_k)}+\mathcal{L}_q\to\frac{b}{f(a)}+\mathcal{L}_q$ and $\frac{a_k}{g(b_k)}+\mathcal{L}_p\to\frac{a}{g(b)}+\mathcal{L}_p$ as $k\to\infty$. For $k$ sufficiently large, these constants are bounded, say by $C_1$ and $C_2$ respectively.

Inequalities \eqref{eq:phi-ineq-k}--\eqref{eq:psi-ineq-k} can be rewritten in divergence form:%
\begin{equation}
\left\{ 
\begin{array}{c}
\left( \rho^{n-1}\Phi ^{\prime }(u_{k}(\rho))\right) ^{\prime }\geq
-\rho^{n-1}p(\rho)g(\frac{b_k}{f(a_k)}+\mathcal{L}_{q}), \\ 
\left( \rho^{n-1}\Psi ^{\prime }(v_{k}(\rho))\right) ^{\prime }\geq
-\rho^{n-1}q(\rho)f(\frac{a_k}{g(b_k)}+\mathcal{L}_{p}).%
\end{array}%
\right.
\label{eq:div-form-k}
\end{equation}%
\smallskip\textbf{Step 4: Integration and derivation of lower bounds.} Integrating \eqref{eq:div-form-k} over $[0,r]$ with $0<r<R_{k}$, and using $\Phi'(u_k(0))=-\frac{u_k'(0)}{g(f(u_k(0)))}=0$ (since $u_k'(0)=0$), we obtain%
\begin{equation}
r^{n-1}\Phi ^{\prime }(u_{k}(r))\geq -\int_{0}^{r}s^{n-1}p(s)g\left(\frac{b_k}{f(a_k)}+\mathcal{L}_{q}\right)ds.
\label{eq:Phi-prime-bound}
\end{equation}%
Dividing by $r^{n-1}$ yields%
\begin{equation*}
\Phi ^{\prime }(u_{k}(r))\geq -r^{1-n}\int_{0}^{r}s^{n-1}p(s)g\left(\frac{b_k}{f(a_k)}+\mathcal{L}_{q}\right)ds.
\end{equation*}%
Similarly,%
\begin{equation*}
\Psi ^{\prime }(v_{k}(r))\geq -r^{1-n}\int_{0}^{r}s^{n-1}q(s)f\left(\frac{a_k}{g(b_k)}+\mathcal{L}_{p}\right)ds.
\end{equation*}

Now, we integrate over $[r,R_{k}]$. Since $u_{k}(t)\rightarrow \infty$ and $v_{k}(t)\rightarrow \infty$ as $t\rightarrow R_{k}^{-}$ (by \eqref{eq:uk-vk-blowup}), and since $\Phi(t)=\int_t^\infty\frac{ds}{g(f(s))}\to 0$ as $t\to\infty$ (by the finite integral condition \eqref{eq:finite-recipo}), we have%
\begin{equation}
\lim_{t\to R_k^-}\Phi (u_{k}(t))=0,\quad \lim_{t\to R_k^-}\Psi (v_{k}(t))=0.
\label{eq:Phi-Psi-limit-zero}
\end{equation}

Integrating the inequality for $\Phi'(u_k(\rho))$ over $\rho\in[r,R_k)$, we obtain%
\begin{align*}
0-\Phi(u_k(r))&=\lim_{t\to R_k^-}\Phi(u_k(t))-\Phi(u_k(r))=\int_r^{R_k}\Phi'(u_k(\rho))\,d\rho\\
&\geq -\int_r^{R_k}\rho^{1-n}\int_0^\rho s^{n-1}p(s)g\left(\frac{b_k}{f(a_k)}+\mathcal{L}_q\right)ds\,d\rho.
\end{align*}%
Thus%
\begin{equation}
\Phi (u_{k}(r))\leq g\left(\frac{b_k}{f(a_k)}+\mathcal{L}_q\right)\int_{r}^{R_{k}}t^{1-n}\int_{0}^{t}s^{n-1}p(s)\,ds\,dt.
\label{eq:Phi-uk-upper}
\end{equation}%
Similarly,%
\begin{equation}
\Psi (v_{k}(r))\leq f\left(\frac{a_k}{g(b_k)}+\mathcal{L}_p\right)\int_{r}^{R_{k}}t^{1-n}\int_{0}^{t}s^{n-1}q(s)\,ds\,dt.
\label{eq:Psi-vk-upper}
\end{equation}%
\smallskip\textbf{Step 5: Inversion and passage to the limit.} Since $\Phi$ and $\Psi$ are strictly decreasing (see \eqref{dec}), they are invertible. From \eqref{eq:Phi-uk-upper}--\eqref{eq:Psi-vk-upper}, we obtain%
\begin{align}
u_{k}(r) &\geq \Phi ^{-1}\left(g\left(\frac{b_k}{f(a_k)}+\mathcal{L}_q\right)\int_{r}^{R_{k}}t^{1-n}\int_{0}^{t}s^{n-1}p(s)\,ds\,dt\right),\label{eq:uk-lower}\\
v_{k}(r) &\geq \Psi ^{-1}\left(f\left(\frac{a_k}{g(b_k)}+\mathcal{L}_p\right)\int_{r}^{R_{k}}t^{1-n}\int_{0}^{t}s^{n-1}q(s)\,ds\,dt\right).\label{eq:vk-lower}
\end{align}

Now, let $k\to\infty$. We have:
\begin{itemize}
\item $R_k\to\infty$ (Step 2);
\item $(a_k,b_k)\to(a,b)$, so $g\left(\frac{b_k}{f(a_k)}+\mathcal{L}_q\right)\to g\left(\frac{b}{f(a)}+\mathcal{L}_q\right)=:G_0$ and $f\left(\frac{a_k}{g(b_k)}+\mathcal{L}_p\right)\to f\left(\frac{a}{g(b)}+\mathcal{L}_p\right)=:F_0$ by continuity;
\item For any fixed $r>0$,%
\begin{equation*}
\int_r^{R_k}t^{1-n}\int_0^t s^{n-1}p(s)\,ds\,dt\to\int_r^\infty t^{1-n}\int_0^t s^{n-1}p(s)\,ds\,dt=\mathcal{P}(\infty)-\mathcal{P}(r)
\end{equation*}%
as $k\to\infty$, where $\mathcal{P}(\infty)=\mathcal{L}_p(n-2)<\infty$ by assumption (PQ). Similarly for $q$.
\end{itemize}

By the continuity of $\Phi^{-1}$ and passing to the limit in \eqref{eq:uk-lower} as $k\to\infty$, we obtain (noting that $u_k\to u$ locally uniformly from Step 2's argument):%
\begin{equation}
u(r)\geq\Phi^{-1}\left(G_0\left[\mathcal{L}_p(n-2)-\mathcal{P}(r)\right]\right)=\Phi^{-1}\left(G_0\int_r^\infty t^{1-n}\int_0^t s^{n-1}p(s)\,ds\,dt\right).
\label{eq:u-lower-final}
\end{equation}%
Similarly,%
\begin{equation}
v(r)\geq\Psi^{-1}\left(F_0\int_r^\infty t^{1-n}\int_0^t s^{n-1}q(s)\,ds\,dt\right).
\label{eq:v-lower-final}
\end{equation}

\smallskip\textbf{Step 6: Blow-up at infinity.} Since $p,q$ are not both identically zero and $\min(p,q)$ does not have compact support (assumption (PQ)), at least one of the integrals $\int_0^\infty s^{n-1}p(s)\,ds$ or $\int_0^\infty s^{n-1}q(s)\,ds$ is positive. Thus, as $r\to\infty$,%
\begin{equation*}
\int_r^\infty t^{1-n}\int_0^t s^{n-1}p(s)\,ds\,dt\to 0\quad\text{and/or}\quad\int_r^\infty t^{1-n}\int_0^t s^{n-1}q(s)\,ds\,dt\to 0.
\end{equation*}

Since $\Phi(t)\to 0$ as $t\to\infty$ (by \eqref{eq:finite-recipo}), we have $\Phi^{-1}(s)\to\infty$ as $s\to 0^+$. Similarly for $\Psi^{-1}$. Therefore, from \eqref{eq:u-lower-final}--\eqref{eq:v-lower-final}, as $r\to\infty$,%
\begin{equation*}
u(r)\geq\Phi^{-1}(G_0\cdot[\text{positive term}\to 0])\to\infty,\quad v(r)\geq\Psi^{-1}(F_0\cdot[\text{positive term}\to 0])\to\infty.
\end{equation*}

Hence,%
\begin{equation*}
\lim_{r\to\infty}u(r)=\infty\quad\text{and}\quad\lim_{r\to\infty}v(r)=\infty,
\end{equation*}%
proving that $(u,v)$ is a large solution. This completes the proof.
\end{proof}

\begin{remark}[Criticality]
The Keller--Osserman condition (\ref{eq:finite-recip}) is sharp for this
method: if it fails, the barrier function is infinite at infinity and the
radial integration argument cannot force blow-up (see our work \cite%
{Covei2017} for this case).
\end{remark}

\begin{remark}
Theorem~\ref{thm:large} shows that the \textquotedblleft
edge\textquotedblright\ $\partial S$ of the set of admissible central values
corresponds precisely to the threshold between bounded entire solutions and
large entire solutions. This is analogous to the scalar case treated in the
classical Keller--Osserman theory, but here the coupling between $u$ and $v$
requires a more delicate comparison argument.
\end{remark}

\section{Compatibility between Theorem~\protect\ref{thm:large} and
Theorem~2.4 of \protect\cite{Covei2017}\label{comp}}

\subsection{General framework}

Both \emph{ELRS} (Theorem~\ref{thm:large}) and Theorem~2.4 of \cite%
{Covei2017} study radial elliptic systems of the form~\eqref{eq:system},
where in \cite{Covei2017} $S_{k_i}$ denotes the $k_i$-Hessian operator (the
case $k_i=1$ being the Laplacian). Here $p,q$ are nonnegative radial weights
and $f,g$ are positive nonlinearities.

\begin{itemize}
\item In ELRS, finite Keller--Osserman-type conditions are imposed to ensure
the existence of both \emph{bounded entire} and \emph{large} (blow-up at
infinity) radial solutions, depending on the central values $(u(0),v(0))$.

\item In T2.4, the focus is on positive \emph{bounded entire} radial
solutions for the Hessian system with gradient term, under radial symmetry,
a factorization structure for $f,g$, and a strict hierarchy between the
functionals $\mathcal{P},\mathcal{Q},\Phi,\Psi$.
\end{itemize}

\subsection{A common particular example}

Let

\begin{eqnarray*}
g(s) &=&s^{\alpha },\quad f(s)=s^{\beta },\quad \alpha =\beta =\frac{6}{5}%
,\quad s\geq 0, \\
p(r) &=&q(r)=\frac{1}{(1+r)^{\gamma }},\quad \gamma =5,\quad r\geq 0.
\end{eqnarray*}

Then:

\begin{itemize}
\item $f$ and $g$ are continuous, positive, strictly increasing on $%
(0,\infty)$;

\item the classical Keller--Osserman condition holds;

\item the potential integrals are finite since $p,q\sim r^{-5}$;

\item $\alpha \beta >1$ ensures the finiteness of the $\circ $-functionals;

\item the strict hierarchy $P_{i,j}(\infty) < H_{i,j}(\infty)$ required in
T2.4 is satisfied.
\end{itemize}

\subsection{Applicability of the two results}

\paragraph{For ELRS:}

The finite Keller--Osserman conditions together with finite potential
integrals place this example in the class where there exist \emph{both}
bounded entire solutions (central values in the interior of $S$) and large
solutions (central values on the boundary $E=\partial S$). ELRS
distinguishes these regimes by the location of the central value.

\paragraph{For T2.4:}

All hypotheses (radial symmetry, factorization, monotonicity, strict
hierarchy) are met. The theorem yields a positive bounded entire radial
solution for central values satisfying the strict inequality in the
hypotheses. Such values lie in the \emph{interior} of $S$ in the ELRS
framework.

\subsection{Why there is no contradiction}

Let $S$ be the set of central values for which entire solutions exist, and $%
E=\partial S\cap(0,\infty)^2$ its ``edge'' in the positive quadrant.

\begin{itemize}
\item T2.4 produces bounded entire solutions for $(u(0),v(0))$ in the
interior of $S$.

\item ELRS asserts that for $(u(0),v(0))\in E$ any entire solution is large,
while for interior points bounded entire solutions also exist.
\end{itemize}

In the above example, choosing central values according to T2.4 (strict
inequality) places the solution in $\mathrm{int}(S)$, outside the scope of
the ``large solution'' conclusion of ELRS. The two results are therefore 
\emph{compatible}: they describe different behaviours (bounded vs.\ large)
in disjoint regions of the central value space, even for the same
coefficients.

Moreover, $E$ is \emph{not} empty under the stated hypotheses: taking the
boundary $\partial S$ in $\mathbb{R}^2$ and intersecting with $(0,\infty)^2$
yields a nonempty edge where the qualitative behaviour changes. Points of $E$
mark the transition between bounded entire solutions (interior) and large
solutions (edge), making $E$ both topologically and analytically significant.

\section{Concluding remarks\label{6}}

We have established:

\begin{itemize}
\item Existence of entire positive radial solutions for all central values $%
(a,b)$ in a nonempty open set $\mathcal{T}$.

\item The set $S$ of all such central values is closed in $[0,\infty )^{2}$.

\item Points on the edge $E$ correspond to large solutions, where both
components blow up at infinity.
\end{itemize}

The key novelty lies in the use of the Keller--Osserman-type transforms $%
\Phi $ and $\Psi $ together with the forcing inequalities of Lemma~\ref%
{lem:subharmonic}, which allow us to control the growth of solutions under
the finite reciprocal integral conditions \eqref{eq:finite-recip}. This
framework unifies and extends previous results for scalar equations and
special systems, and applies to a broad class of nonlinearities and weight
functions.

\section{Appendix}

\begin{theorem}
\label{teh}Let $f,g:[0,\infty )\rightarrow \lbrack 0,\infty )$ be
non-decreasing, continuous functions with%
\begin{equation*}
f(0)=g(0)=0,\quad f(s)>0\ \text{and}\ g(s)>0\ \text{for all }s>0.
\end{equation*}%
If%
\begin{equation*}
\int_{1}^{\infty }\frac{dt}{f(t)}<\infty \quad \text{and}\quad
\int_{1}^{\infty }\frac{dt}{g(t)}<\infty ,
\end{equation*}%
then (\ref{eq:finite-recipo}) holds.
\end{theorem}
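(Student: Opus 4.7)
The plan is to deduce the finiteness of the composed reciprocal integrals $L_f$ and $L_g$ by first extracting from the single-function hypotheses a superlinear growth estimate for $f$ and $g$, and then invoking a monotone comparison.

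The technical core I would establish is the following elementary lemma: if $\phi:[1,\infty)\to(0,\infty)$ is nondecreasing and $\int_1^\infty ds/\phi(s)<\infty$, then $\phi(s)/s\to\infty$ as $s\to\infty$. I would prove this by contradiction. Suppose $\liminf_{s\to\infty}\phi(s)/s\leq C<\infty$; pick a sequence $s_n\to\infty$ with $s_{n+1}\geq 2s_n$ and $\phi(s_n)\leq(C+1)s_n$. Monotonicity then gives $\phi(s)\leq \phi(s_n)\leq(C+1)s_n\leq 2(C+1)s$ for every $s\in[s_n/2,s_n]$, so each of the pairwise disjoint intervals $[s_n/2,s_n]$ (disjoint because $s_{n+1}/2\geq s_n$) contributes at least $(\log 2)/(2(C+1))$ to $\int_1^\infty ds/\phi(s)$; summing yields divergence, contradicting the hypothesis.

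Once the lemma is in place, applying it to $f$ produces $s_0\geq 1$ with $f(s)\geq s$ for all $s\geq s_0$. Monotonicity of $g$ then yields $g(f(s))\geq g(s)$ on $[s_0,\infty)$, so
\[
\int_{s_0}^\infty \frac{ds}{g(f(s))}\ \leq\ \int_{s_0}^\infty \frac{ds}{g(s)}\ <\ \infty.
\]
The remaining piece $\int_1^{s_0} ds/g(f(s))$ is finite because $g\circ f$ is continuous and strictly positive on the compact set $[1,s_0]$; together these give $L_f<\infty$. The symmetric argument---apply the lemma to $g$ and use monotonicity of $f$---yields $L_g<\infty$.

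The main obstacle is the superlinearity lemma itself; once it is in place, the remainder is essentially a one-line monotone comparison. It is worth noting that the entire proof uses only monotonicity, continuity, and positivity on $(0,\infty)$ of $f$ and $g$, so this appendix result actually depends on strictly weaker hypotheses than the combined conditions (F1)--(F2) of the main text.
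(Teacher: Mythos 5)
Your proof is correct, and it takes a genuinely different route from the paper's. The paper establishes the quantitative averaging bound $\dfrac{1}{f(x)} \leq \dfrac{F(x)}{x-1} \leq \dfrac{L_f}{x-1}$ (and likewise for $g$) using the non-increase of $\varphi = 1/f$; it then bounds $1/f(g(t))$ by $2L_f/g(t)$ once $g(t)\geq 2$ and invokes $\int 1/g < \infty$, i.e., for each composed integral it controls the \emph{outer} function via its reciprocal's average and appeals to integrability of the \emph{inner} function's reciprocal. You instead prove the qualitative superlinearity lemma $\phi(s)/s \to \infty$ via a dyadic-interval contradiction, obtain $f(s)\geq s$ for large $s$, and then compare $g(f(s))\geq g(s)$ directly, pairing the \emph{inner} function's growth with integrability of the \emph{outer} function's reciprocal. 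Both are elementary and both use each hypothesis; the paper's version yields explicit multiplicative constants in the comparison while yours trades those constants for a cleaner pointwise domination $1/g(f(s)) \leq 1/g(s)$. One small note: your superlinearity lemma is strictly stronger than what you use ($f(s)\geq s$ eventually would follow already from $\liminf f(s)/s > 1$), and the paper's linear lower bound $f(x)\geq (x-1)/L_f$ would \emph{not} suffice for your comparison step unless $L_f\leq 1$, so your stronger lemma is genuinely needed for your route. Your closing observation that the proof uses only monotonicity, continuity, and positivity is accurate, though the theorem as stated already assumes no more than that, so it restates the hypotheses rather than weakening them.
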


\begin{proof}
\textbf{Step 1: Establishing the key inequality.} Define the auxiliary function%
\begin{equation*}
F(x):=\int_{1}^{x}\frac{ds}{f(s)},\quad x\geq 1.
\end{equation*}%
By hypothesis, $\int_1^\infty\frac{ds}{f(s)}<\infty$, so $F$ is well-defined, strictly increasing (since $f>0$ on $(1,\infty)$), and bounded above:%
\begin{equation}
0\leq F(x)\leq L_{f}:=\int_{1}^{\infty }\frac{ds}{f(s)}<\infty\quad\text{for all }x\geq 1.
\label{eq:F-bound}
\end{equation}%
Since $f:[0,\infty)\to[0,\infty)$ is non-decreasing and positive on $(0,\infty)$, the function 
\begin{equation*}
\varphi (x):=\frac{1}{f(x)},\quad x\in[1,\infty),
\end{equation*}%
is positive, continuous, and non-increasing on $[1,\infty )$ (since $f$ is non-decreasing). 

We claim that for every $x>1$,%
\begin{equation}
\frac{1}{f(x)}\leq\frac{1}{x-1}\int_{1}^{x}\frac{ds}{f(s)}=\frac{F(x)}{x-1}.
\label{eq:key-ineq}
\end{equation}%
\emph{Proof of the claim:} Since $\varphi $ is non-increasing on $[1,\infty)$, we have for any $s\in[1,x]$:%
\begin{equation*}
\varphi (s)=\frac{1}{f(s)}\geq\frac{1}{f(x)}=\varphi(x),
\end{equation*}%
because $f(s)\leq f(x)$ by monotonicity of $f$. Integrating over $s\in[1,x]$ gives%
\begin{equation*}
\int_{1}^{x}\varphi (s)\,ds\geq\int_1^x\varphi(x)\,ds=(x-1)\varphi (x),
\end{equation*}%
which yields%
\begin{equation*}
\varphi(x)=\frac{1}{f(x)}\leq\frac{1}{x-1}\int_{1}^{x}\varphi (s)\,ds=\frac{F(x)}{x-1}.
\end{equation*}%
This proves \eqref{eq:key-ineq}.

\smallskip\textbf{Step 2: Proving $\int_1^\infty\frac{dt}{f(g(t))}<\infty$.} We first establish that $g(t)\to\infty$ as $t\to\infty$. 

\emph{Claim:} If $\int_1^\infty\frac{dt}{g(t)}<\infty$, then $g(t)\to\infty$ as $t\to\infty$.

\emph{Proof:} Suppose, for contradiction, that $g$ does not tend to infinity. Then there exists $M>0$ and a sequence $t_k\to\infty$ such that $g(t_k)\leq M$ for all $k$. Since $g$ is non-decreasing, for any $t\geq t_1$, there exists $k$ such that $t_k\leq t<t_{k+1}$, and%
\begin{equation*}
g(t)\leq g(t_{k+1})\leq M.
\end{equation*}%
Thus $g(t)\leq M$ for all $t\geq t_1$, which implies%
\begin{equation*}
\frac{1}{g(t)}\geq\frac{1}{M}\quad\text{for all }t\geq t_1.
\end{equation*}%
But then%
\begin{equation*}
\int_1^\infty\frac{dt}{g(t)}\geq\int_{t_1}^\infty\frac{dt}{g(t)}\geq\frac{1}{M}\int_{t_1}^\infty dt=\infty,
\end{equation*}%
contradicting the hypothesis. Therefore, $g(t)\to\infty$ as $t\to\infty$.

Since $g(t)\to\infty$ as $t\to\infty$ and $g$ is continuous, there exists $T\geq 1$ such that $g(t)\geq 2$ for all $t\geq T$. For such $t$, using the key inequality \eqref{eq:key-ineq} with $x=g(t)\geq 2>1$, we obtain%
\begin{equation*}
\frac{1}{f(g(t))}\leq \frac{F(g(t))}{g(t)-1}\leq\frac{L_f}{g(t)-1},
\end{equation*}%
where the second inequality uses \eqref{eq:F-bound}. Since $g(t)\geq 2$, we have $g(t)-1\geq g(t)/2$, hence%
\begin{equation*}
\frac{1}{g(t)-1}\leq\frac{2}{g(t)}.
\end{equation*}%
Therefore,%
\begin{equation*}
\frac{1}{f(g(t))}\leq\frac{L_f}{g(t)-1}\leq\frac{2L_{f}}{g(t)}\quad\text{for all }t\geq T.
\end{equation*}%
Integrating over $[T,\infty)$ gives%
\begin{equation*}
\int_{T}^{\infty }\frac{dt}{f(g(t))}\leq 2L_{f}\int_{T}^{\infty }\frac{dt}{%
g(t)}<\infty,
\end{equation*}%
where the last inequality holds by hypothesis. On the finite interval $[1,T]$, the integrand $\frac{1}{f(g(t))}$ is continuous (as a composition of continuous functions) and finite (since $f(g(t))>0$ for $t\in[1,T]$), so%
\begin{equation*}
\int_{1}^{T}\frac{dt}{f(g(t))}<\infty.
\end{equation*}%
Combining both parts yields%
\begin{equation}
\int_{1}^{\infty }\frac{dt}{f(g(t))}<\infty.
\label{eq:int-fg-finite}
\end{equation}%
\smallskip\textbf{Step 3: Proving $\int_1^\infty\frac{dt}{g(f(t))}<\infty$.} The proof is completely analogous to Step 2 upon swapping the roles of $f$ and $g$. 

Define%
\begin{equation*}
G(x):=\int_{1}^{x}\frac{ds}{g(s)},\quad x\geq 1,
\end{equation*}%
with upper bound%
\begin{equation*}
L_{g}:=\int_{1}^{\infty }\frac{ds}{g(s)}<\infty.
\end{equation*}%
By the same argument as in Step 1, for every $x>1$,%
\begin{equation*}
\frac{1}{g(x)}\leq \frac{G(x)}{x-1}\leq\frac{L_g}{x-1}.
\end{equation*}

By the hypothesis $\int_1^\infty\frac{dt}{f(t)}<\infty$ and the claim in Step 2 (with $f$ in place of $g$), we have $f(t)\to\infty$ as $t\to\infty$. Therefore, there exists $T'\geq 1$ such that $f(t)\geq 2$ for all $t\geq T'$. For such $t$, using the above inequality with $x=f(t)\geq 2$,%
\begin{equation*}
\frac{1}{g(f(t))}\leq\frac{L_g}{f(t)-1}\leq\frac{2L_g}{f(t)}.
\end{equation*}%
Integrating over $[T',\infty)$ gives%
\begin{equation*}
\int_{T'}^{\infty }\frac{dt}{g(f(t))}\leq 2L_{g}\int_{T'}^{\infty }\frac{dt}{f(t)}<\infty.
\end{equation*}%
On the finite interval $[1,T']$, the integrand is continuous and finite (since $g(f(t))>0$), so%
\begin{equation*}
\int_1^{T'}\frac{dt}{g(f(t))}<\infty.
\end{equation*}%
Combining both parts yields%
\begin{equation}
\int_{1}^{\infty }\frac{dt}{g(f(t))}<\infty.
\label{eq:int-gf-finite}
\end{equation}

\smallskip\textbf{Conclusion.} From \eqref{eq:int-fg-finite} and \eqref{eq:int-gf-finite}, we have established that%
\begin{equation*}
\int_1^\infty\frac{dt}{f(g(t))}<\infty\quad\text{and}\quad\int_1^\infty\frac{dt}{g(f(t))}<\infty,
\end{equation*}%
which is precisely condition \eqref{eq:finite-recipo}. This completes the proof.
\end{proof}

\end{document}